\newtheorem{theorem}{Theorem}[section]  
\newtheorem{observation}[section]{Observation}
\theoremstyle{definition}
\newtheorem{definition}[theorem]{Definition}
\newtheorem{proposition}{Proposition}[section]
\newtheorem{lemma}{Lemma}[section]
\theoremstyle{definition} 
\newtheorem*{maintheorem*}{Main Theorem}
\journal{Fuzzy Sets and Systems}
\begin{document}

\begin{frontmatter}

\title{Categorical Accommodation of Graded Fuzzy Topological System, Graded Frame and Fuzzy Topological Space with Graded inclusion}


\author[mymainaddress]{Purbita Jana\corref{mycorrespondingauthor}}
\cortext[mycorrespondingauthor]{Corresponding author}
\ead{purbita\_presi@yahoo.co.in}

\author[mysecondaryaddress]{Mihir .K. Chakraborty}
\ead{mihirc4@gmail.com}

\fntext[myfootnote]{The research work of the first author is supported by ``Women Scientists Scheme-
A (WOS-A)" under the File Number: SR/WOS-A/PM-1010/2014.}

\address[mymainaddress]{Department of Pure Mathematics, University of Calcutta, \\35, Ballygunge Circular Road, Ballygunge, Kolkata-700019, West Bengal,  India.}
\address[mysecondaryaddress]{School of Cognitive Science, Jadavpur University,\\188, Raja S. C. Mallick Road, Kolkata-700032, West Bengal, India.}

\begin{abstract}
A detailed study of graded frame, graded fuzzy topological system and fuzzy topological space with graded inclusion is already done in our earlier paper. The notions of graded fuzzy topological system and fuzzy topological space with graded inclusion were obtained via fuzzy geometric logic with graded consequence. As an off shoot the notion of graded frame has been developed. This paper deals with a detailed categorical study of graded frame, graded fuzzy topological system and fuzzy topological space with graded inclusion and their interrelation. 
\end{abstract}

\begin{keyword}
Fuzzy relations \sep Non-classical logic \sep Category theory \sep Topology \sep Algebra \sep Graded frame \sep Graded fuzzy topological system \sep Fuzzy topological space with graded inclusion\sep Fuzzy geometric logic with graded consequence.
\end{keyword}

\end{frontmatter}


\section{Introduction}

The motivation of this work mostly came from the main topic of Vickers's book ``Topology via Logic" \cite{SV}, where he introduced a notion of topological system and indicated its connection with geometric logic. The relationships among topological space, topological system, frame and geometric logic play an important role in the study of topology through logic (geometric logic). Naturally the question ``from which logic fuzzy topology can be studied?" comes in mind. If such a logic is obtained what could be its significance?

To answer these questions, as basic steps, we first introduced some notions of fuzzy topological systems and established the interrelation with appropriate topological spaces and algebraic structures. These relationships were studied in categorical framework \cite{DM1, MP, MP1, MP2, SR, SO, SO1}.

In \cite{PM}, another level of generalisation (i.e., introduction of many-valuedness) has taken place giving rise to graded fuzzy topological system (vide Definition \ref{gftsy}) and fuzzy topological space with graded inclusion (vide Definition \ref{gfts}). It was also required to generalise the notion of frame to graded frame (vide Definition \ref{gf}).

Geometric logic is endowed with an informal observational semantics \cite{SVI}: whether what has been observed does satisfy (match) an assertion or not. In fact, from the stand point of observation, negative and implicational propositions and universal quantification face ontological difficulties. On the other hand arbitrary disjunction needs to be included (cf. \cite{SVI} for an elegant discussion on this issue). Now, observations of facts and assertions about them may corroborate with each other partially. It is a fact of reality and in such cases it is natural to invoke the concept of `satisfiability to some degree of a statement with respect to the observation'. As a result the question whether some assertion follows from some other assertion might not have a crisp answer `yes'/`no'. It is likely that in general the `relation of following' or more technically speaking, the consequence relation turnstile ($\vdash$) may be itself many-valued or graded (vide Definition \ref{validg}). For an introduction to the general theory of graded consequence relation we refer to \cite{MK,MSD}. Thus, we have adopted graded satisfiability as well as graded consequence in \cite{PM}.

A further generalised notions such as fuzzy geometric logic with graded consequence, fuzzy topological spaces with graded inclusion, graded frame and graded fuzzy topological systems came into the picture \cite{PM}. 

The categories of the fuzzy topological spaces with graded inclusion, graded fuzzy topological systems and graded frames are introduced in this paper. On top of that, this paper depicts the transformation of morphisms between the objects which play an interesting role too. Through the categorical study it becomes more clear why the graded inclusion is important in the fuzzy topology to establish the desired connections. 

\subsection*{Fuzzy geometric logic with graded consequence\cite{PM}}
The \textbf{alphabet} of the language $\mathscr{L}$ of fuzzy geometric logic with graded consequence comprises of the connectives $\wedge$, $\bigvee$, the existential quantifier $\exists$, parentheses $)$ and $($ as well as:
\begin{itemize}
\item countably many individual constants $c_1,c_2,\dots$;
\item denumerably many individual variables $x_1,x_2,\dots$;
\item propositional constants $\top$, $\bot$;
\item for each $i>0$, countably many $i$-place predicate symbols $p^i_j$'s, including at least the $2$-place symbol ``$=$" for identity;
\item for each $i>0$, countably many $i$-place function symbols $f^i_j$'s.
\end{itemize}
\begin{definition}[Term]\label{term}
\textbf{Terms} are recursively defined in the usual way.
\begin{itemize}
\item every constant symbol $c_i$ is a term;
\item every variable $x_i$ is a term;
\item if $f_j$ is an $i$-place function symbol, and $t_1,t_2,\dots,t_i$ are terms then\\ $f^i_jt_1t_2\dots t_i$ is a term;
\item nothing else is a term.
\end{itemize}
\end{definition}
\begin{definition}[Geometric formula]\label{wff}
\textbf{Geometric formulae} are recursively defined as follows:
\begin{itemize}
\item $\top$, $\bot$ are geometric formulae;
\item if $p_j$ is an $i$-place predicate symbol, and $t_1,t_2,\dots,t_i$ are terms then $p^i_jt_1t_2\dots t_i$ is a geometric formula;
\item if $t_i$, $t_j$ are terms then $(t_i=t_j)$ is a geometric formula;
\item if $\phi$ and $\psi$ are geometric formulae then $(\phi\wedge\psi)$ is a geometric formula;
\item if $\phi_i$'s ($i\in I$) are geometric formulae then $\bigvee\{\phi_i\}_{i\in I}$ is a geometric formula, when $I=\{1,2\}$ then the above formula is written as $\phi_1\vee \phi_2$;
\item if $\phi$ is a geometric formula and $x_i$ is a variable then $\exists x_i\phi$ is a geometric formula;
\item nothing else is a geometric formula.
\end{itemize}
\end{definition}
\begin{definition}[Interpretation]\label{interpretation}
An \textbf{interpretation} $I$ consists of
\begin{itemize}
\item a set $D$, called the domain of interpretation;
\item an element $I(c_i)\in D$ for each constant $c_i$;
\item a function $I(f^i_j):D^i\longrightarrow D$ for each function symbol $f^i_j$;
\item a fuzzy relation $I(p^i_j):D^i\longrightarrow [0,1]$ for each predicate symbol $p^i_j$ i.e. it is a fuzzy subset of $D^i$.
\end{itemize}
\end{definition}
\begin{definition}[Graded Satisfiability]\label{sat}
Let $s$ be a sequence over $D$. Let $s=(s_1,s_2,\dots)$ be a sequence over $D$ where $s_1,s_2,\dots$ are all elements of $D$. Let $d$ be an element of $D$. Then $s(d/x_i)$ is the result of replacing $i$'th coordinate of $s$ by $d$ i.e., $s(d/x_i)=(s_1,s_2,\dots,s_{i-1},d,s_{i+1},\dots)$. Let $t$ be a term. Then $s$ assigns an element $s(t)$ of $D$ as follows:
\begin{itemize}
\item if $t$ is the constant symbol $c_i$ then $s(c_i)=I(c_i)$;
\item if $t$ is the variable $x_i$ then $s(x_i)=s_i$;
\item if $t$ is the function symbol $f^i_jt_1t_2\dots t_i$ then\\ $s(f^i_jt_1t_2\dots t_i)=I(f^i_j)(s(t_1),s(t_2),\dots,s(t_i))$.
\end{itemize}
Now we define grade of satisfiability of $\phi$ by $s$ written as $gr(s\ \emph{sat}\ \phi)$, where $\phi$ is a geometric formula, as follows:
\begin{itemize}
\item $gr(s\ \emph{sat}\ p^i_jt_1t_2\dots t_i)=I(p^i_j)(s(t_1),s(t_2),\dots,s(t_i))$;
\item $gr(s\ \emph{sat}\ \top)=1$;
\item $gr(s\ \emph{sat}\ \bot)=0$;
\item $gr(s\ \emph{sat}\ t_i=t_j)$ $=\begin{cases}
        1 & \emph{if $s(t_i)=s(t_j)$} \\
        0 & \emph{otherwise};
    \end{cases}$
\item $gr(s\ \emph{sat}\ \phi_1\wedge\phi_2)=gr(s\ \emph{sat}\ \phi_1)\wedge gr(s\ \emph{sat}\ \phi_2)$;
\item $gr(s\ \emph{sat}\ \phi_1\vee\phi_2)=gr(s\ \emph{sat}\ \phi_1)\vee gr(s\ \emph{sat}\ \phi_2)$;
\item $gr(s\ \emph{sat}\ \bigvee\{\phi_i\}_{i\in I})=sup\{gr(s\ \emph{sat}\ \phi_i)\mid i\in I\}$;
\item $gr(s\ \emph{sat}\ \exists x_i\phi)=sup\{gr(s(d/ x_i)\ \emph{sat}\ \phi)\mid d\in D\}$.
\end{itemize}
\end{definition}
 In $[0,1]$, we have used $\wedge$ and $\vee$ to mean min and max respectively - a convention that will be followed throughout.
The expression $\phi\vdash\psi$, where $\phi$, $\psi$ are wffs, is called a sequent. We now define satisfiability of a sequent.
\begin{definition}\label{validg}
1. $gr(s\ \emph{sat}\ \phi\vdash\psi)$=$gr(s\ \emph{sat}\ \phi)\rightarrow gr(s\ \emph{sat}\ \psi)$, where $\rightarrow:[0,1]\times [0,1]\longrightarrow [0,1]$ is the G$\ddot{o}$del arrow defined as follows:\\
$a\rightarrow b$ $=\begin{cases}
        1 & \emph{if $a\leq b$} \\
        b & \emph{if $a>b$},
    \end{cases}$
    
    for $a,b\in [0,1]$.\\
2. $gr(\phi\vdash\psi)=inf_s\{ gr(s\ \emph{sat}\ \phi\vdash \psi)\}$, where $s$ ranges over all sequences over the domain $D$ of interpretation.
\end{definition}
\begin{theorem}\label{5.1g}
Graded sequents satisfy the following properties
\begin{enumerate}
\item 
$gr(\phi\vdash\phi)=1$,
\item
$gr(\phi\vdash\psi)\wedge gr(\psi\vdash\chi)\leq gr(\phi\vdash\chi)$,
\item
(i) $gr(\phi\vdash\top)=1$,\hspace{29pt}  
(ii) $gr(\phi\wedge\psi\vdash\phi)=1$,\\
(iii) $gr(\phi\wedge\psi\vdash\psi)=1$,\hspace{4pt}
(iv) $gr(\phi\vdash\psi)\wedge gr(\phi\vdash\chi)=gr(\phi\vdash\psi\wedge\chi)$,
\item 
(i) $gr(\phi\vdash\bigvee S)=1$ if $\phi\in S$,\\
(ii)$inf_{\phi\in S} \{gr(\phi\vdash\psi)\}\leq gr(\bigvee S\vdash\psi)$,
\item
$gr(\phi\wedge\bigvee S\vdash\bigvee\{\phi\wedge\psi\mid\psi\in S\})=1$,
\item
$gr(\top\vdash (x=x))=1$,
\item
$gr(((x_1,\dots,x_n)=(y_1,\dots,y_n))\wedge\phi\vdash\phi[(y_1,\dots,y_n)/(x_1,\dots,x_n)])$=1,
\item
(i) $gr(\phi\vdash\psi[x/ y])\leq gr(\phi\vdash\exists y\psi)$,\hspace{4pt}
(ii) $gr(\exists y\phi\vdash\psi)\leq gr(\phi[x/ y]\vdash\psi)$,
\item
$gr(\phi\wedge (\exists y)\psi\vdash(\exists y)(\phi\wedge\psi))=1$.
\end{enumerate}
\end{theorem}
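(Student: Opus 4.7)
The overall plan is to reduce every item to a pointwise statement about the quantities $a_s := gr(s\ \text{sat}\ \phi)$, $b_s := gr(s\ \text{sat}\ \psi)$, etc., and then take the infimum over the sequences $s$. To do this I would first record some preliminary facts about the G\"odel arrow and $[0,1]$, all by easy case analysis: (L1) $a\to b=1$ iff $a\leq b$; (L2) $a\to b$ is antitone in $a$ and monotone in $b$; (L3) $(a\to b)\wedge (b\to c)\leq a\to c$; (L4) $a\to (b\wedge c)=(a\to b)\wedge (a\to c)$; (L5) $(\sup_i a_i)\to b=\inf_i(a_i\to b)$; and (L6) $\wedge$ distributes over arbitrary $\sup$ in $[0,1]$. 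I would also need two routine substitution/coincidence lemmas for \rdef{sat}, proved by induction on formula structure: (S1) if $s(x_i)=s(y_i)$ for every $i$ appearing in a substitution, then $gr(s\ \text{sat}\ \phi)=gr(s\ \text{sat}\ \phi[(y_1,\dots,y_n)/(x_1,\dots,x_n)])$; and (S2) $gr(s\ \text{sat}\ \psi[x/y])=gr(s(s(x)/y)\ \text{sat}\ \psi)$.

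With these in hand, the purely arrow-based items are immediate. Item~1 is $a_s\to a_s=1$ by (L1). Item~3(i)--(iii) follow from $gr(s\ \text{sat}\ \top)=1$, $gr(s\ \text{sat}\ \phi\wedge\psi)=\min\{a_s,b_s\}\leq a_s$, and symmetry, each combined with (L1). Item~3(iv) is (L4) applied pointwise followed by $\inf_s$. Item~4(i) follows from $a_s\leq\sup_{\phi\in S}a_s$ and (L1). Item~6 is immediate from the $t_i=t_j$ clause of \rdef{sat}. For Item~2, apply (L3) pointwise to get $(a_s\to b_s)\wedge(b_s\to c_s)\leq a_s\to c_s$, then use $\inf_s f(s)\wedge\inf_s g(s)\leq\inf_s(f(s)\wedge g(s))$ to combine the two sequence-infima on the left.

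Item~4(ii) is slightly more than stated: unfolding the outer $gr(\bigvee S\vdash\psi)=\inf_s\big((\sup_{\phi\in S}a_s^\phi)\to b_s\big)$ and applying (L5) gives $\inf_s\inf_{\phi\in S}(a_s^\phi\to b_s)$; swapping the two infima yields $\inf_{\phi\in S}\inf_s(a_s^\phi\to b_s)=\inf_{\phi\in S}gr(\phi\vdash\psi)$, so in fact equality holds. Item~5 is pure (L6): pointwise, $a_s\wedge\sup_{\psi\in S}b_s^\psi=\sup_{\psi\in S}(a_s\wedge b_s^\psi)$, so the two sides of the sequent have equal grade under every $s$ and the arrow yields $1$. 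Item~9 (under the standing convention that $y$ is not free in $\phi$) is the same distributive calculation: $a_s\wedge\sup_d gr(s(d/y)\ \text{sat}\ \psi)=\sup_d(gr(s(d/y)\ \text{sat}\ \phi)\wedge gr(s(d/y)\ \text{sat}\ \psi))$, using that $gr(s(d/y)\ \text{sat}\ \phi)=a_s$ when $y\notin FV(\phi)$.

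The steps that carry the real work are 7 and 8, both of which rest on the substitution/coincidence lemmas (S1) and (S2). For Item~7, I would split on whether $s(x_i)=s(y_i)$ for all $i$: if yes, (S1) says the grades of $\phi$ and of $\phi[(y)/(x)]$ agree under $s$ while the equality clause of \rdef{sat} gives the equality-formula grade $1$, so the conjunction's grade is at most the substituted formula's grade; if no, the equality clause gives $0$, hence the antecedent's grade is $0$, and (L1) finishes the job. For Item~8(i), (S2) rewrites $gr(s\ \text{sat}\ \psi[x/y])$ as $gr(s(s(x)/y)\ \text{sat}\ \psi)$, which is bounded above by $\sup_d gr(s(d/y)\ \text{sat}\ \psi)=gr(s\ \text{sat}\ \exists y\psi)$; then (L2) (monotonicity in the consequent) gives the sequent inequality, and $\inf_s$ concludes. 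Item~8(ii) is symmetric, using antitonicity in the antecedent. The main obstacle is therefore really just setting up (S1) and (S2) cleanly by induction on the geometric formula, treating the $\exists$-clause and the $\bigvee$-clause carefully for bound-variable capture; once these are in place the nine items assemble mechanically from (L1)--(L6).
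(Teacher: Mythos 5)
Your proposal is correct, and it is the standard argument: reduce each item to a pointwise fact about the G\"odel arrow on $[0,1]$ (your (L1)--(L6)) together with the usual substitution/coincidence lemmas, then pass to $\inf_s$. Note that the present paper states Theorem~\ref{5.1g} without proof --- it is recalled from the earlier work \cite{PM} --- so there is no in-text argument to compare against; your reduction is exactly what one would expect that proof to be, and you correctly identify the two genuinely non-mechanical points, namely the inductive lemmas (S1)--(S2) and the tacit side condition $y\notin FV(\phi)$ in item~9 (and the corresponding variable conditions in item~8), without which the claimed grade of $1$ in item~9 can fail.
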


\section{Categories: Graded Fuzzy Top, Graded Fuzzy TopSys, Graded Frm and their interrelationships}
\begin{definition}\label{gfts}
Let $X$ be a set, $\tau$ be a collection of fuzzy subsets of $X$ s.t.
\begin{enumerate}
\item $\tilde\emptyset$ , $\tilde X\in \tau$, where $\tilde\emptyset (x)=0$ and $\tilde X (x)=1$, for all $x\in X$;
\item $\tilde T_i\in\tau$ for $i\in I$ imply $\bigcup_{i\in I}\tilde T_i\in \tau$, where $\bigcup_{i\in I}\tilde{T_i}(x)=sup_{i\in I}\{\tilde{T_i}(x)\}$;
\item $\tilde T_1$, $\tilde T_2\in\tau$ imply $ \tilde T_1\cap\tilde T_2\in\tau$, where $(\tilde{T_1}\cap \tilde{T_2})(x)=\tilde{T_1}(x)\wedge \tilde{T_2}(x)$,
\end{enumerate}
and $\subseteq$ be a fuzzy inclusion relation between fuzzy sets, which is defined as follows. For any two fuzzy subsets $\tilde{T_1}, \tilde{T_2}$ of $X$,   $gr(\tilde{T_1}\subseteq{\tilde{T_2}})=inf_{x\in X}\{\tilde{T_1}(x)\rightarrow\tilde{T_2}(x)\}$, where $\rightarrow$ is the G$\ddot{o}$del arrow.

Then $(X,\tau,\subseteq)$ is called a \textbf{fuzzy topological space with graded inclusion}. $(\tau,\subseteq)$ is called a \textbf{fuzzy topology with graded inclusion} over $X$. 
\end{definition}
It is to be noted that we preferred to use the traditional notation $\tilde{A}$ to denote a fuzzy set \cite{MA}.
We list the properties of the members of fuzzy topology with graded inclusion, as propositions, that would be used subsequently. By routine check all the propositions can be verified. \begin{proposition}\label{gt1}
$gr(\tilde{T}\subseteq\tilde{T})=1.$
\end{proposition}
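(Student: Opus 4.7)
The plan is to unwind the definition of the graded inclusion relation and apply the Gödel arrow to a pair of identical values. By Definition~\ref{gfts}, we have
\begin{equation*}
gr(\tilde{T}\subseteq\tilde{T})=\inf_{x\in X}\{\tilde{T}(x)\rightarrow\tilde{T}(x)\},
\end{equation*}
where $\rightarrow$ is the Gödel arrow introduced in Definition~\ref{validg}.

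Next, I would invoke the definition of the Gödel arrow: for any $a\in[0,1]$, since $a\leq a$, the first clause applies and yields $a\rightarrow a=1$. Specialising with $a=\tilde{T}(x)$ gives $\tilde{T}(x)\rightarrow\tilde{T}(x)=1$ for every $x\in X$.

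Finally, taking the infimum of the constant family $\{1\}_{x\in X}$ over $X$ produces $1$, which completes the argument. There is no genuine obstacle here; the statement is an immediate consequence of the reflexivity built into the Gödel implication, so the only thing to be careful about is citing the correct clause of the piecewise definition of $\rightarrow$ when $a=b$.
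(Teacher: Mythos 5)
Your argument is correct and is exactly the routine verification the paper has in mind (the paper omits the proof, remarking only that all of Propositions \ref{gt1}--\ref{gt10} follow "by routine check"). Unwinding the definition of graded inclusion and using the first clause of the Gödel arrow, $a\rightarrow a=1$, is precisely the intended computation.
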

\begin{proposition}\label{gt2}
$gr(\tilde{T_1}\subseteq\tilde{T_2})=1=gr(\tilde{T_2}\subseteq\tilde{T_1})\Rightarrow \tilde{T_1}=\tilde{T_2}$.
\end{proposition}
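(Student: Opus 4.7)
The plan is to unfold the definition of graded inclusion and reduce the statement to pointwise comparison of membership values. Recall from Definition~\ref{gfts} that
\[
gr(\tilde{T_1}\subseteq \tilde{T_2})=\inf_{x\in X}\{\tilde{T_1}(x)\rightarrow \tilde{T_2}(x)\},
\]
where $\rightarrow$ is the G\"odel arrow. I would first observe the elementary fact that whenever an infimum over $[0,1]$-valued quantities equals $1$, each quantity must itself equal $1$, since every term is bounded above by $1$. Thus the hypothesis $gr(\tilde{T_1}\subseteq \tilde{T_2})=1$ forces $\tilde{T_1}(x)\rightarrow \tilde{T_2}(x)=1$ for every $x\in X$.

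Next I would invoke the defining clauses of the G\"odel arrow: $a\rightarrow b=1$ precisely when $a\leq b$ (in the complementary case $a>b$ the value is $b<1$). Applying this pointwise yields $\tilde{T_1}(x)\leq \tilde{T_2}(x)$ for every $x\in X$. An entirely symmetric argument, starting from $gr(\tilde{T_2}\subseteq \tilde{T_1})=1$, gives the reverse inequality $\tilde{T_2}(x)\leq \tilde{T_1}(x)$ for every $x\in X$.

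Combining the two inequalities I obtain $\tilde{T_1}(x)=\tilde{T_2}(x)$ for all $x\in X$, which is precisely the assertion $\tilde{T_1}=\tilde{T_2}$ as fuzzy subsets of $X$. There is no genuine obstacle here; the only point that requires a moment of care is the passage from ``infimum equals $1$'' to ``every term equals $1$'', but this is immediate from the codomain being $[0,1]$. The proof is essentially a two-line unpacking of definitions together with the standard antisymmetry argument familiar from ordinary set inclusion.
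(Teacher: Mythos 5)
Your proof is correct and is exactly the ``routine check'' the paper alludes to (the paper omits proofs of Propositions \ref{gt1}--\ref{gt10} entirely): unfold the infimum, use that an infimum of $[0,1]$-valued terms equal to $1$ forces every term to be $1$, read off $\tilde{T_1}(x)\leq\tilde{T_2}(x)$ from the G\"odel arrow, and conclude by symmetry and pointwise antisymmetry. No issues.
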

\begin{proposition}\label{gt3}
$gr(\tilde{T_1}\subseteq\tilde{T_2})\wedge gr(\tilde{T_2}\subseteq\tilde{T_3})\leq gr(\tilde{T_1}\subseteq\tilde{T_3}).$
\end{proposition}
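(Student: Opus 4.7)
My plan is to reduce the statement to a pointwise inequality on $[0,1]$, then take infima. Unfolding the definition, I need to show
\[
\inf_{x\in X}\{\tilde T_1(x)\to \tilde T_2(x)\}\,\wedge\,\inf_{x\in X}\{\tilde T_2(x)\to \tilde T_3(x)\}\,\leq\,\inf_{x\in X}\{\tilde T_1(x)\to \tilde T_3(x)\},
\]
where $\to$ is the G\"odel arrow. The first observation is that for functions $f,g:X\to [0,1]$, one has $\inf_x f(x)\wedge \inf_x g(x)=\inf_x(f(x)\wedge g(x))$, since $\wedge$ is min. Thus it suffices to establish, for every fixed $x\in X$, the numerical inequality
\[
\bigl(\tilde T_1(x)\to\tilde T_2(x)\bigr)\,\wedge\,\bigl(\tilde T_2(x)\to\tilde T_3(x)\bigr)\,\leq\,\tilde T_1(x)\to\tilde T_3(x).
\]

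The key lemma to establish is therefore the transitivity property of the G\"odel arrow on $[0,1]$: for all $a,b,c\in[0,1]$, $(a\to b)\wedge(b\to c)\leq a\to c$. I would verify this by a short case analysis. If $a\leq c$, the right-hand side equals $1$ and the inequality is trivial. Otherwise $a>c$, so $a\to c=c$, and I split on whether $a\leq b$ or $a>b$: in the first case $a\to b=1$ so the left-hand side reduces to $b\to c$, and since $a\leq b$ together with $a>c$ forces $b>c$, giving $b\to c=c$; in the second case $a\to b=b$, and a subsplit on $b$ versus $c$ yields the left-hand side equal to either $b\wedge 1=b\leq c$ (when $b\leq c$) or $b\wedge c=c$ (when $b>c$). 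Either way the left-hand side is bounded by $c$, as required.

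Having the lemma, I would simply take the infimum of both sides of the pointwise inequality over $x\in X$. The right-hand side becomes $gr(\tilde T_1\subseteq \tilde T_3)$ by definition, while the left-hand side, by the distributivity of $\inf$ over $\wedge$ noted above, equals $gr(\tilde T_1\subseteq \tilde T_2)\wedge gr(\tilde T_2\subseteq \tilde T_3)$. This yields the desired inequality.

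The only non-trivial step is the G\"odel-arrow transitivity lemma, and even that is just case analysis; no genuine obstacle appears. I note also that the proposition mirrors clause (2) of Theorem~\ref{5.1g}, which is unsurprising since both $gr(\tilde T_1\subseteq\tilde T_2)$ and $gr(\phi\vdash\psi)$ are defined as infima of G\"odel implications, so essentially the same argument applies in either setting.
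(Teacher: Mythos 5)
Your proof is correct, and it supplies exactly the ``routine check'' that the paper omits for Propositions \ref{gt1}--\ref{gt10}: reduce to the pointwise transitivity $(a\rightarrow b)\wedge(b\rightarrow c)\leq a\rightarrow c$ of the G\"odel arrow and use that $\inf$ commutes with the binary $\wedge$ (min). Both the case analysis and the infimum step check out, so there is nothing to compare against or correct.
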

\begin{proposition}\label{gt4}
$gr(\tilde{T_1}\cap\tilde{T_2}\subseteq\tilde{T_1})=1=gr(\tilde{T_1}\cap\tilde{T_2}\subseteq\tilde{T_2})$.
\end{proposition}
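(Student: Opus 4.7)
The plan is to unwind the definitions of $gr(\cdot\subseteq\cdot)$, of $\cap$, and of the Gödel arrow $\rightarrow$, and observe that the pointwise inequality that defines $\cap$ is exactly the antecedent of the first case of the Gödel arrow. Because that case outputs $1$, every pointwise value in the infimum is $1$, and the infimum is therefore $1$.

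In more detail, for any $x \in X$ we have $(\tilde{T_1}\cap\tilde{T_2})(x) = \tilde{T_1}(x) \wedge \tilde{T_2}(x)$, and since $\wedge$ denotes min on $[0,1]$, this value is $\leq \tilde{T_1}(x)$. By the first clause in the definition of the Gödel arrow, $(\tilde{T_1}\cap\tilde{T_2})(x)\rightarrow \tilde{T_1}(x) = 1$. Taking the infimum over $x \in X$ gives $gr(\tilde{T_1}\cap\tilde{T_2}\subseteq \tilde{T_1}) = \inf_{x\in X}\{(\tilde{T_1}\cap\tilde{T_2})(x)\rightarrow \tilde{T_1}(x)\} = 1$. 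The argument for $gr(\tilde{T_1}\cap\tilde{T_2}\subseteq \tilde{T_2})=1$ is identical, swapping the roles of $\tilde{T_1}$ and $\tilde{T_2}$ and using $\tilde{T_1}(x)\wedge \tilde{T_2}(x)\leq \tilde{T_2}(x)$.

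There is no genuine obstacle here: the statement is a direct pointwise computation once the definitions are substituted, which is exactly why the authors note that all four propositions \ref{gt1}--\ref{gt4} admit routine verification. The only thing one needs to be mildly careful about is that the infimum of a constant function equal to $1$ is $1$ (trivially), so no delicate supremum/infimum argument is required.
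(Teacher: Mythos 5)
Your proof is correct and is precisely the routine pointwise verification the paper has in mind when it states that all the propositions following Definition \ref{gfts} "can be verified by routine check": since $(\tilde{T_1}\cap\tilde{T_2})(x)=\tilde{T_1}(x)\wedge\tilde{T_2}(x)\leq\tilde{T_i}(x)$ for $i=1,2$, the first clause of the G\"odel arrow yields $1$ at every point and the infimum is $1$. The only slip is incidental: there are ten such propositions (Propositions \ref{gt1}--\ref{gt10}), not four, but this does not affect the argument.
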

\begin{proposition}\label{gt5}
$gr(\tilde{T}\subseteq\tilde{X})=1$.
\end{proposition}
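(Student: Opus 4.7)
The plan is a direct unfolding of the definitions. By Definition \ref{gfts}, $\tilde{X}$ is the fuzzy subset of $X$ with $\tilde{X}(x)=1$ for every $x\in X$, and the graded inclusion is given by
\[
gr(\tilde{T}\subseteq\tilde{X})=\inf_{x\in X}\{\tilde{T}(x)\rightarrow\tilde{X}(x)\},
\]
where $\rightarrow$ is the Gödel arrow.

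First I would fix an arbitrary $x\in X$ and evaluate $\tilde{T}(x)\rightarrow\tilde{X}(x)=\tilde{T}(x)\rightarrow 1$. Since $\tilde{T}(x)\in[0,1]$, we have $\tilde{T}(x)\leq 1$, so by the first clause in the definition of the Gödel arrow (Definition \ref{validg}), $\tilde{T}(x)\rightarrow 1=1$.

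Then, taking the infimum over $x\in X$ yields $\inf_{x\in X}\{1\}=1$, which gives $gr(\tilde{T}\subseteq\tilde{X})=1$, as required. There is no serious obstacle here — the statement is essentially the maximality of $\tilde{X}$ among fuzzy subsets of $X$ combined with the reflexivity clause of the Gödel implication, and the argument is a one-line computation once the definitions are unfolded.
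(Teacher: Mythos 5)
Your proof is correct and is precisely the routine verification the paper leaves implicit: unfold the definition of graded inclusion, note $\tilde{X}(x)=1$ so $\tilde{T}(x)\rightarrow 1=1$ by the first clause of the G\"odel arrow, and take the infimum. Nothing further is needed.
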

\begin{proposition}\label{gt6}
$gr(\tilde{T_1}\subseteq\tilde{T_2})\wedge gr(\tilde{T_1}\subseteq\tilde{T_3})=gr(\tilde{T_1}\subseteq\tilde{T_2}\cap\tilde{T_3})$.
\end{proposition}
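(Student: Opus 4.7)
The plan is to unpack the definitions and reduce the claim to a pointwise identity about the Gödel arrow, then push it through the infimum.

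First I would expand both sides using the defining formulas from Definition \ref{gfts} and the meet-formula for $\cap$. The left-hand side becomes
\[
\bigl(\inf_{x\in X}\{\tilde{T_1}(x)\rightarrow\tilde{T_2}(x)\}\bigr)\wedge\bigl(\inf_{x\in X}\{\tilde{T_1}(x)\rightarrow\tilde{T_3}(x)\}\bigr),
\]
while the right-hand side is
\[
\inf_{x\in X}\{\tilde{T_1}(x)\rightarrow(\tilde{T_2}(x)\wedge\tilde{T_3}(x))\}.
\]
So the key reduction is the pointwise identity $a\rightarrow(b\wedge c)=(a\rightarrow b)\wedge(a\rightarrow c)$ for the Gödel arrow, together with the standard fact that $\inf$ commutes with binary $\wedge$ on $[0,1]$, i.e.\ $\inf_x(f(x)\wedge g(x))=(\inf_x f(x))\wedge(\inf_x g(x))$.

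For the pointwise identity I would do a short case analysis on the ordering of $a$ with respect to $b$ and $c$: (i) $a\le b$ and $a\le c$, so $a\le b\wedge c$ and both sides equal $1$; (ii) $a\le b$ but $a>c$, so $b\wedge c=c<a$, making both sides equal $c$; (iii) the symmetric case $a>b$, $a\le c$ giving $b$ on both sides; (iv) $a>b$ and $a>c$, so $a>b\wedge c$, making both sides equal $b\wedge c$. Each case is immediate from the defining clauses of the Gödel arrow. For the commutativity of $\inf$ with $\wedge$, the inequality $\inf_x(f\wedge g)\le\inf_x f\wedge\inf_x g$ is immediate since $f\wedge g$ lies below both $f$ and $g$, and the reverse follows because $\inf_x f\wedge\inf_x g\le f(x)\wedge g(x)$ for every $x$.

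Combining these two facts yields the required equality. I do not expect any real obstacle here: the proposition is essentially the statement that the Gödel implication is right-adjoint-like with respect to meets, and this propagates smoothly through the infimum defining graded inclusion. The only mild subtlety is making sure the case $a=b$ or $a=c$ is covered by the $a\le b$ clause, which it is by the definition of the Gödel arrow in Definition \ref{validg}.
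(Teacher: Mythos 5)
Your proof is correct, and since the paper gives no explicit argument for this proposition (it only remarks that all of Propositions \ref{gt1}--\ref{gt10} follow ``by routine check''), your expansion --- the pointwise Gödel-arrow identity $a\rightarrow(b\wedge c)=(a\rightarrow b)\wedge(a\rightarrow c)$ via case analysis on the total order of $[0,1]$, followed by commuting $\inf$ with binary $\wedge$ --- is exactly the intended routine verification. Your reliance on totality of the order in the case split is precisely why, as the authors note in their concluding remarks, this proposition fails when $[0,1]$ is replaced by an arbitrary frame.
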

\begin{proposition}\label{gt7}
$gr(\tilde{T_i}\subseteq\bigcup_i\tilde{T_i})=1.$
\end{proposition}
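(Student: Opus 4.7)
The plan is to unwind the definitions and observe that the pointwise inequality $\tilde{T_i}(x) \leq \bigcup_i \tilde{T_i}(x)$ holds trivially from the fact that $\bigcup_i \tilde{T_i}(x)$ is defined as a supremum. Once that is in hand, the G\"odel arrow collapses to $1$ at every point, and the infimum defining the graded inclusion is a constant $1$.

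More concretely, first I would fix an index $i \in I$ and an arbitrary $x \in X$ and note that by the definition in Proposition/Definition \ref{gfts} we have $\bigcup_{i\in I}\tilde{T_i}(x) = \sup_{i\in I}\{\tilde{T_i}(x)\}$, hence $\tilde{T_i}(x) \leq \bigcup_{i\in I}\tilde{T_i}(x)$. Next, by the first clause of the G\"odel arrow in Definition \ref{validg}, whenever $a \leq b$ we have $a \rightarrow b = 1$; applying this with $a = \tilde{T_i}(x)$ and $b = \bigcup_{i\in I}\tilde{T_i}(x)$ gives $\tilde{T_i}(x) \rightarrow \bigcup_{i\in I}\tilde{T_i}(x) = 1$ for every $x \in X$.

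Finally I would substitute into the formula $gr(\tilde{T_i}\subseteq \bigcup_i \tilde{T_i}) = \inf_{x \in X}\{\tilde{T_i}(x)\rightarrow \bigcup_{i\in I}\tilde{T_i}(x)\}$ from Definition \ref{gfts}, obtaining an infimum of the constant function $1$, which is $1$. There is no real obstacle here; the whole content lies in noticing that the supremum definition of $\bigcup$ is precisely engineered to make each component pointwise smaller, so the residuum evaluates to $1$. The only thing to be slightly careful about is the edge case $I = \emptyset$, which is not relevant in the present context since we are quantifying over a fixed existing index $i$.
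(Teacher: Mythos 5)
Your proof is correct and is exactly the routine verification the paper has in mind (the paper omits proofs of Propositions \ref{gt1}--\ref{gt10}, stating they follow "by routine check"): the pointwise inequality $\tilde{T_i}(x)\leq\sup_{i\in I}\{\tilde{T_i}(x)\}$ makes the G\"odel arrow equal to $1$ at every point, so the infimum is $1$.
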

\begin{proposition}\label{gt8}
$inf_{ \tilde{T_i}\in S}\{gr(\tilde{T_i}\subseteq\tilde{T})\}=gr(\bigcup S\subseteq\tilde{T}).$
\end{proposition}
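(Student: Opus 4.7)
The plan is to unwind both sides of the claimed equality using the definition
$gr(\tilde{A}\subseteq\tilde{B})=\inf_{x\in X}\{\tilde{A}(x)\rightarrow\tilde{B}(x)\}$
together with the definition of $\bigcup S$ as the pointwise supremum. After doing that, the proposition reduces to an identity purely about the Gödel arrow on $[0,1]$, namely
$$\inf_{i\in I}\bigl(a_i\rightarrow b\bigr)\;=\;\Bigl(\sup_{i\in I}a_i\Bigr)\rightarrow b,$$
applied pointwise at each $x\in X$ with $a_i=\tilde{T_i}(x)$ and $b=\tilde{T}(x)$, after which one swaps the two infima $\inf_{\tilde{T_i}\in S}\inf_{x\in X}=\inf_{x\in X}\inf_{\tilde{T_i}\in S}$, which is a routine Fubini-type exchange for infima.

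First I would expand: LHS $=\inf_i\inf_{x}\{\tilde{T_i}(x)\rightarrow\tilde{T}(x)\}$ and RHS $=\inf_{x}\{\sup_i\tilde{T_i}(x)\rightarrow\tilde{T}(x)\}$. Swapping the order on the left, the proposition is equivalent to establishing the pointwise identity for every fixed $x$. So the work really concentrates at one point $x\in X$.

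To prove the pointwise lemma I would argue by two-sided inequality. For ``$\leq$'', note that the Gödel arrow is antitone in its first argument (if $a\leq a'$ then $a'\rightarrow b\leq a\rightarrow b$, checked by cases on whether $a'\leq b$ or not), so $\sup_i a_i\rightarrow b\leq a_j\rightarrow b$ for every $j$, whence $\sup_i a_i\rightarrow b\leq\inf_j(a_j\rightarrow b)$. For ``$\geq$'', split into two cases. If $\sup_i a_i\leq b$, then the left side of the desired inequality equals $1$ and the conclusion is automatic. If $\sup_i a_i>b$, then by definition of supremum there exists some index $j$ with $a_j>b$, for which $a_j\rightarrow b=b$; hence $\inf_i(a_i\rightarrow b)\leq b=\sup_i a_i\rightarrow b$, as required.

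The only mildly delicate step is the second case of the pointwise lemma, because one must be sure that $\sup_i a_i>b$ actually furnishes a witness $a_j>b$ rather than merely a sequence approaching $b$ from below; this is handled by the standard observation that a strict inequality $\sup_i a_i>b$ does produce such a witness. Once that pointwise identity is in hand, assembling it with the $\inf$-swap gives the proposition immediately, and no further machinery (such as Propositions \ref{gt1}--\ref{gt7}) is needed.
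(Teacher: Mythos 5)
Your proof is correct: the reduction to the pointwise G\"odel-arrow identity $\inf_{i}(a_i\rightarrow b)=\bigl(\sup_{i}a_i\bigr)\rightarrow b$, the extraction of a witness $a_j>b$ in the case $\sup_i a_i>b$, and the interchange of the two infima are all sound (and the argument even degenerates correctly when $S$ is empty). The paper gives no proof of Proposition~\ref{gt8}, stating only that all the propositions follow ``by routine check,'' and your argument is exactly the verification being left to the reader.
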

\begin{proposition}\label{gt9}
$gr(\tilde{T}\cap\bigcup_i\tilde{T_i}\subseteq \bigcup_i(\tilde{T}\cap\tilde{T_i}))=1$.
\end{proposition}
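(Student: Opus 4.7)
The plan is to unfold the definition of the graded inclusion and reduce the statement to a pointwise inequality in $[0,1]$. Recall
\[
gr(\tilde{A}\subseteq\tilde{B}) \;=\; \inf_{x\in X}\bigl\{\tilde{A}(x)\rightarrow\tilde{B}(x)\bigr\},
\]
where $\rightarrow$ is the Gödel arrow. Since the Gödel arrow $a\rightarrow b$ equals $1$ exactly when $a\leq b$, to prove that the whole infimum is $1$ it suffices to show, for every $x\in X$, the pointwise inequality
\[
\bigl(\tilde{T}\cap\textstyle\bigcup_i\tilde{T_i}\bigr)(x) \;\leq\; \bigl(\textstyle\bigcup_i(\tilde{T}\cap\tilde{T_i})\bigr)(x).
\]

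First I would rewrite both sides using the given definitions of $\cap$ and $\bigcup$. The left side becomes $\tilde{T}(x)\wedge \sup_i\{\tilde{T_i}(x)\}$, while the right side becomes $\sup_i\{\tilde{T}(x)\wedge\tilde{T_i}(x)\}$. Writing $a:=\tilde{T}(x)$ and $b_i:=\tilde{T_i}(x)$, the task reduces to the standard lattice-theoretic identity
\[
a\wedge\sup_i b_i \;=\; \sup_i(a\wedge b_i) \qquad \text{in } [0,1].
\]

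Then I would establish this identity (equality, not merely $\leq$) by a short two-way argument: the inequality $\sup_i(a\wedge b_i)\leq a\wedge\sup_i b_i$ is immediate since each $a\wedge b_i$ is bounded above by both $a$ and $\sup_i b_i$. For the reverse, I would split into the cases $a\leq\sup_i b_i$ and $a>\sup_i b_i$; in the latter case each $b_i<a$, so $a\wedge b_i=b_i$ and taking sup gives $\sup_i b_i=a\wedge\sup_i b_i$; in the former case one argues from the definition of supremum (either some $b_{i_0}\geq a$ giving $a\wedge b_{i_0}=a$, or the $b_i$ approximate $a$ from below so the sup of $a\wedge b_i=b_i$ still reaches $a$).

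The mildly delicate step is the last case analysis for arbitrary $I$, because one has to handle the possibility that the supremum $\sup_i b_i$ is not attained; everything else is a direct substitution into the defining formulas. Once the pointwise equality is secured, the Gödel arrow evaluates to $1$ at every $x$, the infimum over $X$ is $1$, and the proposition follows.
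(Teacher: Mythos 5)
Your proof is correct and is precisely the ``routine check'' that the paper explicitly omits for Propositions \ref{gt1}--\ref{gt10}: unfold the graded inclusion, note that the G\"odel arrow yields $1$ exactly when the pointwise inequality holds, and reduce to the distributivity of $\wedge$ (min) over arbitrary suprema in the complete chain $[0,1]$. Your handling of the non-attained supremum case is the only point needing care, and your case split on $a\leq\sup_i b_i$ versus $a>\sup_i b_i$ settles it correctly (indeed only the direction $a\wedge\sup_i b_i\leq\sup_i(a\wedge b_i)$ is actually needed for the proposition).
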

\begin{proposition}\label{gt10}
$\tilde{T_1}(x)\wedge gr(\tilde{T_1}\subseteq\tilde{T_2})\leq \tilde{T_2}(x)$, for each $x$.
\end{proposition}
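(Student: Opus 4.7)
The plan is to unpack the definition of graded inclusion, reduce the problem to a pointwise inequality, and then do a short case analysis on the Gödel arrow.

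First I would fix an arbitrary $x\in X$ and set $a=\tilde{T_1}(x)$, $b=\tilde{T_2}(x)$. Since
$$gr(\tilde{T_1}\subseteq\tilde{T_2})=\inf_{y\in X}\{\tilde{T_1}(y)\rightarrow\tilde{T_2}(y)\},$$
this infimum is bounded above by its single term at $y=x$, namely $a\rightarrow b$. Thus it suffices to establish the purely algebraic inequality $a\wedge(a\rightarrow b)\leq b$ for the Gödel arrow, since then
$$\tilde{T_1}(x)\wedge gr(\tilde{T_1}\subseteq\tilde{T_2})\leq a\wedge(a\rightarrow b)\leq b=\tilde{T_2}(x).$$

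The algebraic inequality splits into two cases from the definition of $\rightarrow$. If $a\leq b$, then $a\rightarrow b=1$, and $a\wedge(a\rightarrow b)=a\leq b$. If $a>b$, then $a\rightarrow b=b$, and $a\wedge(a\rightarrow b)=a\wedge b=b$ (because $b<a$). In either case the inequality holds.

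There is no real obstacle here; the proposition is essentially the modus ponens / residuation property of the Gödel implication lifted from the truth values to the fuzzy inclusion, together with the trivial fact that an infimum is dominated by any of its entries. The only point to be careful about is using $\wedge$ consistently as the minimum on $[0,1]$, as fixed by the convention recorded just after Definition \ref{validg}.
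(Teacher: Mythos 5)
Your proof is correct and is exactly the "routine check" the paper alludes to (the paper omits proofs of Propositions \ref{gt1}--\ref{gt10} entirely, stating only that they can be verified by routine check). Bounding the infimum by its term at $y=x$ and then verifying $a\wedge(a\rightarrow b)\leq b$ by the two cases of the G\"odel arrow is the intended argument; nothing is missing.
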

\underline{Note 1}: In \cite{JA}, a notion called localic preordered topological space has been defined which is a 4-tuple $(X,L,\tau ,P)$ where $L$ is a frame, $(X,L,\tau)$ is an $L$-valued fuzzy topological space (localic topological space), $P$ is an $L$-valued fuzzy preorder (reflexive and transitive) on $X$ and $P(x,y)\wedge \tilde{T}(y)\leq \tilde{T}(x)$, for any $x,\ y\in X$ and $\tilde{T}\in\tau$. In our case, we have taken $L$ as $[0,1]$ and usual fuzzy topological space with the exception that the inclusion relation in $\tau$ is defined in terms of a fuzzy implication (G$\ddot{o}$del). That is, the set $X$ has no ordering here but the topology $\tau$ is endowed with a fuzzy ordering relation namely fuzzy inclusion.
\begin{definition}[Graded Frame]\label{gf}
A \textbf{graded frame} is a 5-tuple $(A,\top,\wedge,\bigvee,R)$, where $A$ is a non-empty set,  $\top\in A$, $\wedge$ is a binary operation, $\bigvee$ is an operation on arbitrary subset of $A$, $R$ is a $[0,1]$-valued fuzzy binary relation on $A$ satisfying the following conditions:
\begin{enumerate}
\item $R(a,a)=1$;
\item $R(a,b)=1=R(b,a)\Rightarrow a=b$;
\item $R(a,b)\wedge R(b,c)\leq R(a,c)$;
\item $R(a\wedge b,a)=1=R(a\wedge b,b)$;
\item $R(a,\top)=1$;
\item $R(a,b)\wedge R(a,c)=R(a,b\wedge c)$;
\item $R(a,\bigvee S)=1$ if $a\in S$;
\item $inf\{R(a,b)\mid a\in S\}=R(\bigvee S,b)$;
\item $R(a\wedge\bigvee S,\bigvee\{a\wedge b\mid b\in S\})=1$;
\end{enumerate}
for any $a,\ b,\  c\in A$ and $S\subseteq A$. We will denote graded frame by $(A,R)$. 
\end{definition}
\underline{Note 2}: A graded frame is a localic preordered set \cite{JA} with the value set $[0,1]$ having some extra conditions namely 4-9 of Definition \ref{gf}.
\begin{definition}\label{gftsy}
A \textbf{Graded fuzzy topological system} is a quadruple $(X,\models,A,R)$ consisting of a nonempty set $X$, a graded frame $(A,R)$ and a fuzzy relation $\models$ from $X$ to $A$ such that
\begin{enumerate}
\item $gr(x\models a)\wedge R(a,b)\leq gr(x\models b)$;
\item for any finite subset including null set, $S$, of $A$, $gr(x\models\bigwedge S)=\\ inf\{gr(x\models a)\mid a\in S\}$;
\item for any subset $S$ of $A$, $gr(x\models\bigvee S)= sup\{gr(x\models a)\mid a\in S\}$.
\end{enumerate} 
\end{definition}
\begin{definition}[Spatial]\label{spatial}
A graded fuzzy topological system $(X,\models,A,R)$ is said to be \textbf{spatial}\index{spatial} if and only if (for any $x\in X$, $gr(x\models a)=gr(x\models b)$) imply ($a=b$), for any $a,\ b\in A$.
\end{definition}
\underline{Note 3}: The localic preordered topological system defined in \cite{JA} has a departure from the above notion in that the fuzzy order relation of localic preordered topological system is defined on the set $X$ while in Definition \ref{gftsy} it is defined on the set $A$. That means in \cite{JA}, the fuzzy preorder relation is defined on the object set $X$ while in our case the fuzzy preorder is imposed on the set of properties $A$, additionally we need more conditions on the preorder of the property set to connect the fuzzy topological system with fuzzy geometric logic.
\subsection{Categories}
\subsection*{$\mathbf{Graded\ Fuzzy\ Top}$}
\begin{definition}[{$\mathbf{Graded\ Fuzzy\ Top}$}]\label{Grft}\index{category! Graded Fuzzy Top} 
The category $\mathbf{Graded\ Fuzzy\ Top}$ is defined thus.
\begin{itemize}
\item The objects are fuzzy topological spaces with graded inclusion $(X,\tau,\subseteq)$, $(Y,\tau ',\subseteq)$ etc (c.f. Definition \ref{gfts}).
\item The morphisms\index{Graded Fuzzy Top!morphism} are functions satisfying the following continuity property: If $f:(X,\tau,\subseteq)\longrightarrow (Y,\tau ',\subseteq)$ and $\tilde{T'}\in\tau '$ then $f^{-1}(\tilde{T'})\in\tau$.
\item The identity on $(X,\tau,\subseteq)$ is the identity function. That this is an\\ $\mathbf{Graded\ Fuzzy\ Top}$\index{Graded Fuzzy Top!morphism} morphism can be proved.
\item If $f:(X,\tau,\subseteq)\longrightarrow (Y,\tau ',\subseteq)$ and 
$g:(Y,\tau ',\subseteq)\longrightarrow (Z,\tau '',\subseteq)$ are morphisms in $\mathbf{Graded\ Fuzzy\ Top}$, their 
composition $g\circ f$ is the composition of functions between two sets. It can be verified that $g\circ f$ is a morphism in $\mathbf{Graded\ Fuzzy\ Top}$.
\end{itemize}
\end{definition}
\begin{proposition}\label{ftres}
$gr(\tilde{T_1}\subseteq\tilde{T_2})\leq gr(f(\tilde{T_1})\subseteq f(\tilde{T_2}))$.
\end{proposition}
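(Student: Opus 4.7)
The plan is to reduce this to a pointwise inequality in $[0,1]$ via Proposition~\ref{gt10}, then lift back through sup/inf to recover the graded inclusion on $Y$. Let $\alpha := gr(\tilde{T_1}\subseteq\tilde{T_2})$; the goal is to show $\alpha \le gr(f(\tilde{T_1})\subseteq f(\tilde{T_2}))$, unraveling $f(\tilde T)(y) = \sup\{\tilde T(x) : f(x)=y\}$ on the right.

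First I would apply Proposition~\ref{gt10} to conclude that for every $x\in X$,
\[
\tilde{T_1}(x)\wedge \alpha \le \tilde{T_2}(x).
\]
Next, I would fix an arbitrary $y\in Y$ and take the supremum over the fibre $f^{-1}(y)$. Since $\wedge$ is just $\min$ on $[0,1]$ and min distributes over arbitrary suprema in each argument, I get
\[
f(\tilde{T_1})(y)\wedge \alpha \;=\; \sup_{x\in f^{-1}(y)} \bigl(\tilde{T_1}(x)\wedge \alpha\bigr) \;\le\; \sup_{x\in f^{-1}(y)} \tilde{T_2}(x) \;=\; f(\tilde{T_2})(y).
\]
(If $f^{-1}(y)=\emptyset$, both images are $0$ at $y$ and the inequality is trivial.)

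Then I would convert this meet-form inequality back into the Gödel arrow. A short case split shows that $a\wedge\alpha\le b$ in $[0,1]$ forces $\alpha \le (a\to b)$: if $a\le b$ then $a\to b=1$, and if $a>b$ then $\alpha$ cannot exceed $b$ (else $a\wedge\alpha$ would exceed $b$), so $\alpha\le b = a\to b$. Applied with $a=f(\tilde{T_1})(y)$ and $b=f(\tilde{T_2})(y)$, this gives $\alpha\le f(\tilde{T_1})(y)\to f(\tilde{T_2})(y)$ for every $y\in Y$. Taking infimum over $y$ and using the definition of graded inclusion yields exactly $\alpha \le gr(f(\tilde{T_1})\subseteq f(\tilde{T_2}))$.

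The only mild obstacle is the sup/meet interchange in the second step; this is fine because on $[0,1]$ the operation $\min(-,\alpha)$ is continuous and preserves arbitrary suprema. Everything else is unfolding of definitions plus the cited Proposition~\ref{gt10} and the elementary residuation property of the Gödel arrow.
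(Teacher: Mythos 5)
Your proof is correct, but it takes a genuinely different route from the paper's. The paper stays entirely in implication form: it unfolds $f(\tilde{T})(y)=\sup_{x\in f^{-1}(y)}\tilde{T}(x)$, rewrites $f(\tilde{T_1})(y)\rightarrow f(\tilde{T_2})(y)$ as $\inf_{x\in f^{-1}(y)}\bigl(\tilde{T_1}(x)\rightarrow \sup_{x\in f^{-1}(y)}\tilde{T_2}(x)\bigr)$ using the law $(\sup_i a_i)\rightarrow c=\inf_i(a_i\rightarrow c)$, then uses monotonicity of the G\"odel arrow in its second argument together with $\inf_{x\in X}\le\inf_{x\in f^{-1}(y)}$ to finish. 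You instead detour through the residuated (meet) form: Proposition~\ref{gt10} gives the pointwise inequality $\tilde{T_1}(x)\wedge\alpha\le\tilde{T_2}(x)$, distributivity of $\min(\cdot,\alpha)$ over arbitrary suprema transports this to the fibre suprema, and the residuation property of the G\"odel arrow ($a\wedge\alpha\le b$ implies $\alpha\le a\rightarrow b$) converts it back before taking the infimum over $y$. Every step you invoke checks out, including the empty-fibre case and the sup/meet interchange (which holds because $[0,1]$ with $\min$ and $\sup$ is a frame). What your version buys is modularity and portability: it reuses an already-stated proposition and only two abstract lattice facts, so it carries over verbatim to any complete residuated lattice whose product distributes over suprema --- relevant to the paper's concluding remarks about replacing $[0,1]$ by a general frame. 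The paper's direct computation is more self-contained at the level of the definitions but leans on the specific identity for the arrow's first argument and needs no conversion back from meet form.
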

\begin{proof}
It is known that $gr(\tilde{T_1}\subseteq\tilde{T_2})=inf_{x\in X}\{\tilde{T_1}(x)\rightarrow\tilde{T_2}(x)\}$ and\\
$gr(f(\tilde{T_1})\subseteq f(\tilde{T_2}))=inf_{y\in Y}\{f(\tilde{T_1})(y)\rightarrow f(\tilde{T_2})(y)\}=\\inf_{y\in Y}\{sup_{x\in f^{-1}(y)}\{\tilde{T_1}(x)\}\rightarrow sup_{x\in f^{-1}(y)}\{\tilde{T_2}(x)\}\}$.\\
Now, $sup_{x\in f^{-1}(y)}\{\tilde{T_1}(x)\}\rightarrow sup_{x\in f^{-1}(y)}\{\tilde{T_2}(x)\}=\\inf_{x\in f^{-1}(y)}\{\tilde{T_1}(x)\rightarrow sup_{x\in f^{-1}(y)}\{\tilde{T_2}(x)\}\}$ and $\tilde{T_2}(x)\leq sup_{x\in f^{-1}(y)}\tilde{T_2}(x)$ for any $x\in f^{-1}(y)$.\\
Therefore $\tilde{T_1}(x)\rightarrow \tilde{T_2}(x)\leq\tilde{T_1}(x)\rightarrow sup_{x\in f^{-1}(y)}\{\tilde{T_2}(x)\}$ for any $x\in f^{-1}(y)$.\\
So, $inf_{x\in f^{-1}(y)}\{\tilde{T_1}(x)\rightarrow \tilde{T_2}(x)\}\leq inf_{x\in f^{-1}(y)}\{\tilde{T_1}(x)\rightarrow sup_{x\in f^{-1}(y)}\{\tilde{T_2}(x)\}\}$.\\
Now, $inf_{x\in X}\{\tilde{T_1}(x)\rightarrow \tilde{T_2}(x)\}\leq inf_{x\in f^{-1}(y)}\{\tilde{T_1}(x)\rightarrow \tilde{T_2}(x)\}$ as $f^{-1}(y)\subseteq X$.\\
So, $inf_{x\in X}\{\tilde{T_1}(x)\rightarrow \tilde{T_2}(x)\}\leq inf_{x\in f^{-1}(y)}\{\tilde{T_1}(x)\rightarrow sup_{x\in f^{-1}(y)}\{\tilde{T_2}(x)\}\}$ for any $y\in Y$ and consequently,\\ $inf_{x\in X}\{\tilde{T_1}(x)\rightarrow \tilde{T_2}(x)\}\leq inf_{y\in Y}\{sup_{x\in f^{-1}(y)}\{\tilde{T_1}(x)\}\rightarrow sup_{x\in f^{-1}(y)}\{\tilde{T_2}(x)\}\}$.
\end{proof}
\begin{definition}[{$\mathbf{Graded\ Frm}$}]\label{Grfrm} 
The category $\mathbf{Graded\ Frm}$\index{category!Graded Frm} is defined thus.
\begin{itemize}
\item The objects are graded frames $(A,R)$, $(B,R')$ etc. (c.f. Definition \ref{gf}).
\item The morphisms\index{Graded Frm!morphism} are graded frame homomorphisms\index{graded!frame homomorphism} defined in the following way: If $f:(A,R)\longrightarrow (B,R')$ then\\
(i) $f(a_1\wedge a_2)=f(a_1)\wedge f(a_2)$;\\
(ii) $f(\bigvee_i a_i)=sup_i\{f(a_i)\}$;\\
(iii) $R(a_1,a_2)\leq R'(f(a_1),f(a_2))$.
\item The identity on $(A,R)$ is the identity morphism. That this is an $\mathbf{Graded\ Frm}$ morphism\index{Graded Frm!morphism} can be proved.
\item If $f:(A,R)\longrightarrow (B,R')$ and 
$g:(B,R')\longrightarrow (C,R'')$ are morphisms in $\mathbf{Graded\ Frm}$, their 
composition $g\circ f$ is the composition of graded homomorphisms between two graded frames. It can be verified that 
$g\circ f$ is a morphism in $\mathbf{Graded\ Frm}$ (vide Proposition \ref{next}).
\end{itemize}
\end{definition}
\begin{proposition}\label{next}
If $f:(A,R)\longrightarrow (B,R')$ and 
$g:(B,R')\longrightarrow (C,R'')$ are morphisms in $\mathbf{Graded\ Frm}$ then $g\circ f:(A,R)\longrightarrow (C,R'')$ is a morphism in $\mathbf{Graded\ Frm}$.
\end{proposition}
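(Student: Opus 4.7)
The plan is to verify the three defining conditions of a graded frame homomorphism (items (i)--(iii) in Definition~\ref{Grfrm}) for $g\circ f$, in each case by chaining the corresponding property of $f$ with that of $g$. Since each condition is local and the required algebraic/order structure is identical on all three frames, this will be essentially a routine verification.

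First, for preservation of binary meet, I would compute $(g\circ f)(a_1\wedge a_2)=g(f(a_1\wedge a_2))=g(f(a_1)\wedge f(a_2))$ using (i) for $f$, and then $=g(f(a_1))\wedge g(f(a_2))=(g\circ f)(a_1)\wedge(g\circ f)(a_2)$ using (i) for $g$. Exactly the same pattern handles arbitrary joins: applying (ii) for $f$ turns $(g\circ f)(\bigvee_i a_i)$ into $g(\sup_i\{f(a_i)\})$, and then (ii) for $g$ rewrites this as $\sup_i\{g(f(a_i))\}=\sup_i\{(g\circ f)(a_i)\}$.

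For the third condition, the $[0,1]$-valued inequality, I would use condition (iii) twice and transitivity of $\leq$ on $[0,1]$: by (iii) for $f$ we have $R(a_1,a_2)\leq R'(f(a_1),f(a_2))$, and by (iii) for $g$ applied to the elements $f(a_1),f(a_2)\in B$ we have $R'(f(a_1),f(a_2))\leq R''(g(f(a_1)),g(f(a_2)))$. Chaining these yields $R(a_1,a_2)\leq R''((g\circ f)(a_1),(g\circ f)(a_2))$, which is condition (iii) for the composite.

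There is no real obstacle here; the slight asymmetry between the equational conditions (i), (ii) and the inequational condition (iii) is handled uniformly by noting that composition of morphisms preserves equalities automatically and preserves inequalities by the transitivity of $\leq$. The only thing to be mildly careful about is, in step two, that applying $g$ to a supremum produces a supremum of the images; this is exactly what (ii) for $g$ provides, so no further assumption is needed.
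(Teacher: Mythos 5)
Your proof is correct and follows exactly the paper's own argument: verify conditions (i)--(iii) of Definition~\ref{Grfrm} for $g\circ f$ by applying the corresponding property of $f$ and then of $g$, using transitivity of $\leq$ on $[0,1]$ for the relational condition (iii). No gaps.
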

\begin{proof}
To show that $g\circ f:(A,R)\longrightarrow (C,R'')$ is a graded frame homomorphism we will proceed in the following way.
\begin{align*}
(i)\ g\circ f (a_1\wedge a_2) & = g(f(a_1\wedge a_2))\\
& = g(f(a_1)\wedge f(a_2)),\ \text{as $f$ is a graded frame homomorphism}\\
& = g(f(a_1))\wedge g(f(a_2)),\text{as $g$ is a graded frame homomorphism}\\
& = g\circ f(a_1)\wedge g\circ f(a_2).\\
(ii)\ g\circ f (\bigvee_i a_i) & = g(f(\bigvee_i a_i))\\
& = g(sup_i\{f(a_i)\}),\ \text{as $f$ is a graded frame homomorphism}\\
& = sup_i\{g(f(a_i))\},\text{as $g$ is a graded frame homomorphism}\\
& = sup_i\{g\circ f(a_i)\}.\\
(iii)\ R(a_1,a_2) & \leq R'(f(a_1),f(a_2)),\ \text{as $f$ is a graded frame homomorphism}\\
& \leq R''(g(f(a_1)),g(f(a_2))),\text{as $g$ is a graded frame homomorphism}\\
& = R''(g\circ f(a_1),g\circ f(a_2)).
\end{align*}
\end{proof}
\begin{definition}[{$\mathbf{Graded\ Fuzzy\ TopSys}$}]\label{Grftsy} 
The category $\mathbf{Graded\ Fuzzy\ TopSys}$\index{category!Graded Fuzzy TopSys} is defined thus.
\begin{itemize}
\item The objects are graded fuzzy topological systems $(X,\models,A,R)$, $(Y,\models,B,R')$ etc. (c.f. Definition \ref{gftsy}).
\item The morphisms are pair of maps satisfying the following continuity properties: If $(f_1,f_2):(X,\models ,A,R)\longrightarrow (Y,\models',B,R')$ then\\
(i) $f_1:X\longrightarrow Y$ is a set map;\\
(ii) $f_2:(B,R')\longrightarrow (A,R)$ is a graded frame homomorphism;\\
(iii) $gr(x\models f_2(b))=gr(f_1(x)\models' b)$.
\item The identity on $(X,\models,A,R)$ is the pair $(id_X,id_A)$, where $id_X$ is the identity map on $X$ and $id_A$ is the identity graded frame homomorphism. That this is an $\mathbf{Graded\ Fuzzy\ TopSys}$ morphism\index{Graded Fuzzy TopSys!morphism} can be proved.
\item If $(f_1,f_2):(X,\models,A,R)\longrightarrow (Y,\models',B,R')$ and 
$(g_1,g_2):(Y,\models',B,R')\longrightarrow (Z,\models'',C,R'')$ are morphisms in $\mathbf{Graded\ Fuzzy\ TopSys}$, their 
composition $(g_1,g_2)\circ (f_1,f_2)=(g_1\circ f_1,f_2\circ g_2)$ is the pair of composition of functions between two sets and composition of graded homomorphisms between two graded frames. It can be verified that 
$(g_1,g_2)\circ (f_1,f_2)$ is a morphism in $\mathbf{Graded\ Fuzzy\ TopSys}$.
\end{itemize}
\end{definition}
\subsection{Functors}
In this subsection, we define various functors, which are required to prove our desired results.
\subsection*{Functor $Ext_g$ from Graded Fuzzy TopSys to Graded Fuzzy Top}\index{functor!$Ext_g$}
\begin{definition}\label{3.8g}
Let $(X,\models,A,R)$ be a graded fuzzy topological system, and $a\in A$. For each $a$, its \textbf{extent$_g$}\index{extent$_g$} in $(X,\models, A,R)$ is a mapping $ext_g(a)$ from $X$ to $[0,1]$ given by $ext_g(a)(x)=gr(x\models a)$.
Also $ext_g(A)=\{ext_g(a)\}_{a\in A}$ and $gr(ext_g(a_1)\subseteq ext_g(a_2))=inf\{ext_g(a_1)(x)\rightarrow ext_g(a_2)(x)\}_x$, for any $a_1,a_2\in A$.
\end{definition}
\begin{lemma}\label{3.9g}
$(ext_g(A),\subseteq)$ forms a graded fuzzy topology on $X$.
\end{lemma}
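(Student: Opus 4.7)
The plan is to verify directly the three axioms of Definition~\ref{gfts} for the family $ext_g(A)$, using the fact that the grade-of-satisfaction map $ext_g$ essentially transports the frame operations of $A$ to pointwise operations on fuzzy subsets of $X$ (conditions (2) and (3) of Definition~\ref{gftsy}). Since the graded inclusion $\subseteq$ is defined uniformly by the Gödel arrow on \emph{any} collection of fuzzy subsets, no further verification is required for $\subseteq$ itself; I only need to show membership and closure.

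First I would handle the two distinguished members. For $\tilde{X}$, take $S = \emptyset$ in condition (2) of Definition~\ref{gftsy}: since $\bigwedge\emptyset = \top$ and $\inf\emptyset = 1$, one gets $ext_g(\top)(x) = gr(x\models\top) = 1 = \tilde{X}(x)$ for every $x\in X$. For $\tilde{\emptyset}$, take $S = \emptyset$ in condition (3): $\bigvee\emptyset = \bot$ (the bottom element of the graded frame, whose existence follows from axiom~8 of Definition~\ref{gf}) and $\sup\emptyset = 0$, so $ext_g(\bot)(x) = 0 = \tilde{\emptyset}(x)$. Hence both $\tilde{\emptyset}$ and $\tilde{X}$ lie in $ext_g(A)$.

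Next I would check closure under arbitrary unions: for any family $\{a_i\}_{i\in I}\subseteq A$, using condition (3) of Definition~\ref{gftsy},
\begin{align*}
\bigcup_{i\in I} ext_g(a_i)(x) &= \sup_{i\in I}\{ext_g(a_i)(x)\} = \sup_{i\in I}\{gr(x\models a_i)\}\\
&= gr\bigl(x\models\textstyle\bigvee_{i\in I} a_i\bigr) = ext_g\bigl(\textstyle\bigvee_{i\in I} a_i\bigr)(x),
\end{align*}
so $\bigcup_{i\in I} ext_g(a_i) = ext_g(\bigvee_{i\in I} a_i) \in ext_g(A)$. Closure under finite intersections follows analogously from condition (2) applied to the finite set $S = \{a_1,a_2\}$: $ext_g(a_1)\cap ext_g(a_2) = ext_g(a_1\wedge a_2) \in ext_g(A)$.

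There is no real obstacle in the argument; the only subtle point is remembering that the $\tilde{\emptyset}$ and $\tilde{X}$ cases are \emph{not} separate axioms that need ad hoc justification, but are subsumed by the $S = \emptyset$ instances of conditions (2) and (3) of Definition~\ref{gftsy}, which in turn depend on reading $\bigwedge\emptyset$ and $\bigvee\emptyset$ as $\top$ and $\bot$ inside the graded frame $(A,R)$. Once this convention is made explicit, the verification is a straightforward pointwise calculation.
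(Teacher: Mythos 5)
Your proof is correct and is exactly the routine verification the paper intends (the paper states Lemma~\ref{3.9g} without proof): you transport $\top$, $\bigvee$ and $\wedge$ through $ext_g$ via conditions (2) and (3) of Definition~\ref{gftsy}, with the empty-set instances giving $\tilde{X}=ext_g(\top)$ and $\tilde{\emptyset}=ext_g(\bigvee\emptyset)$. Your explicit remark that $\bigvee\emptyset$ exists because $\bigvee$ is an operation on arbitrary subsets of $A$ (and is bottom by axiom~8 with $S=\emptyset$) is a worthwhile detail the paper glosses over.
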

As a consequence $(X,ext_g(A),\subseteq)$ forms a graded fuzzy topological space.
\begin{lemma}\label{3.10g}
If $(f_1,f_2):(X,\models ',A,R)\longrightarrow (Y,\models '',B,R')$ is continuous then \\$f_1:(X,ext_g(A),\subseteq)\longrightarrow (Y,ext_g(B),\subseteq)$ is continuous.
\end{lemma}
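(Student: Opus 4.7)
The plan is to unpack what continuity of $f_1$ in $\mathbf{Graded\ Fuzzy\ Top}$ means and reduce it to the compatibility axiom of morphisms in $\mathbf{Graded\ Fuzzy\ TopSys}$. According to Definition \ref{Grft}, I must show that for every $\tilde{T'} \in ext_g(B)$, the pre-image $f_1^{-1}(\tilde{T'})$ lies in $ext_g(A)$.

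First I would fix an arbitrary element of $ext_g(B)$; by Definition \ref{3.8g}, it has the form $ext_g(b)$ for some $b \in B$. Then I would compute the fuzzy pre-image pointwise: for $x \in X$,
\[
f_1^{-1}(ext_g(b))(x) \;=\; ext_g(b)(f_1(x)) \;=\; gr(f_1(x) \models'' b).
\]
Next, I would invoke clause (iii) of Definition \ref{Grftsy}, which guarantees $gr(f_1(x) \models'' b) = gr(x \models' f_2(b))$. By Definition \ref{3.8g}, the right-hand side is precisely $ext_g(f_2(b))(x)$. Since this holds for every $x \in X$, we obtain the equality of fuzzy sets
\[
f_1^{-1}(ext_g(b)) \;=\; ext_g(f_2(b)).
\]
Because $f_2$ is a graded frame homomorphism $B \to A$, we have $f_2(b) \in A$, and so $ext_g(f_2(b)) \in ext_g(A)$. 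This exhibits $f_1^{-1}(ext_g(b))$ as a member of $ext_g(A)$, establishing continuity of $f_1$.

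There is no real obstacle here: the lemma is essentially a direct translation of the compatibility axiom of $\mathbf{Graded\ Fuzzy\ TopSys}$ morphisms into the language of pre-images of fuzzy sets, and the role of $f_2$ is precisely to produce the witness $f_2(b) \in A$ for each $b \in B$. The only point worth verifying with care is the pointwise identity for the pre-image of a fuzzy set, i.e.\ that $f_1^{-1}(\tilde{T'})(x) = \tilde{T'}(f_1(x))$, which is the standard convention used implicitly in Definition \ref{Grft} and in Proposition \ref{ftres}.
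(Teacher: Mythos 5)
Your proposal is correct and follows essentially the same route as the paper's proof: both compute $f_1^{-1}(ext_g(b))(x)=gr(f_1(x)\models'' b)=gr(x\models' f_2(b))=ext_g(f_2(b))(x)$ pointwise and conclude $f_1^{-1}(ext_g(b))=ext_g(f_2(b))\in ext_g(A)$. No gaps.
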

\begin{proof}
$(f_1,f_2):(X,\models ',A,R)\longrightarrow (Y,\models '',B,R')$ is continuous.\\
So we have for all $x\in X, b\in B$, $gr(x\models 'f_2(b))=gr(f_1(x)\models ''b)$.
\begin{align*} 
Now, (f_1^{-1}(ext_g(b)))(x) & =ext_g(b)(f_1(x))\\
& =gr(f_1(x)\models ''b)\\
& =gr(x\models 'f_2(b))\\
& =ext_g(f_2(b))(x).
\end{align*}
$So,$ $f_1^{-1}(ext_g(b))=ext_g(f_2(b))\in ext_g(A)$.\\
$So,$ $f_1$ is a continuous map from $(X,ext_g(A),\subseteq)$ to $(Y,ext_g(B),\subseteq)$.
\end{proof}
\begin{definition}\label{3.11g}
$\mathbf{Ext_g}$\index{functor!$Ext_g$} is a functor from $\mathbf{Graded\ Fuzzy\ TopSys}$ to\\ $\mathbf{Graded\ Fuzzy\ Top}$ defined as follows.\\
$Ext_g$ acts on an object $(X,\models ',A,R)$ as $Ext_g(X,\models ',A,R)=(X,ext_g(A),\subseteq)$ and on a morphism $(f_1,f_2)$ as $Ext_g(f_1,f_2)=f_1$.
\end{definition}
The above two Lemmas \ref{3.9g} and \ref{3.10g} show that $Ext_g$ is a functor.
\subsection*{Functor $J_g$ from $\mathbf{Graded\ Fuzzy\ Top}$ to $\mathbf{Graded\ Fuzzy\ TopSys}$}
\begin{definition}\label{3.12g}
$\mathbf{J_g}$\index{functor!$J_g$} is a functor from $\mathbf{Graded\ Fuzzy\ Top}$ to\\ $\mathbf{Graded\ Fuzzy\ TopSys}$ defined as follows.\\
$J_g$ acts on an object $(X,\tau,\subseteq)$ as $J_g(X,\tau,\subseteq)=(X, \in ,\tau,\subseteq)$, where $gr(x\in \tilde{T})=\tilde{T}(x)$ for $\tilde{T}$ in $\tau$, and on a morphism $f$ as $J_g(f)=(f,f^{-1})$.
\end{definition}
\begin{lemma}\label{3.13g}
$(X,\in , \tau,\subseteq)$ is a graded fuzzy topological system.
\end{lemma}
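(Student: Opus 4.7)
The plan is to unpack Definition \ref{gftsy} and verify the three required conditions, after first confirming that the algebraic component $(\tau,\subseteq)$ really is a graded frame (so that the four-tuple $(X,\in,\tau,\subseteq)$ has a type-correct candidate graded frame on the right).

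First I would observe that $(\tau,\subseteq)$ is a graded frame in the sense of Definition \ref{gf}, with top element $\tilde{X}$, binary meet $\cap$ defined pointwise, arbitrary join $\bigcup$ defined pointwise as $\sup$, and fuzzy relation $R(\tilde{T_1},\tilde{T_2})=gr(\tilde{T_1}\subseteq\tilde{T_2})$. The nine axioms of Definition \ref{gf} are in one-to-one correspondence with Propositions \ref{gt1}--\ref{gt9}, respectively, so this is essentially bookkeeping; in particular no new calculation is needed.

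Next I would verify the three conditions of Definition \ref{gftsy} for $\models\,=\,\in$ with $gr(x\in\tilde{T})=\tilde{T}(x)$:
\begin{enumerate}
\item The inequality $gr(x\in\tilde{T_1})\wedge gr(\tilde{T_1}\subseteq\tilde{T_2})\leq gr(x\in\tilde{T_2})$ becomes $\tilde{T_1}(x)\wedge gr(\tilde{T_1}\subseteq\tilde{T_2})\leq \tilde{T_2}(x)$, which is exactly Proposition \ref{gt10}.
\item For a finite $S\subseteq\tau$, the required identity $gr(x\in\bigcap S)=\inf\{gr(x\in\tilde{T})\mid\tilde{T}\in S\}$ is immediate from the definition $(\tilde{T_1}\cap\tilde{T_2})(x)=\tilde{T_1}(x)\wedge\tilde{T_2}(x)$ iterated finitely (the empty meet gives $\tilde{X}(x)=1$, matching the convention $\inf\emptyset=1$ used here).
\item For an arbitrary $S\subseteq\tau$, the identity $gr(x\in\bigcup S)=\sup\{gr(x\in\tilde{T})\mid\tilde{T}\in S\}$ is immediate from the definition $\bigcup_{i}\tilde{T_i}(x)=\sup_i\{\tilde{T_i}(x)\}$.
\end{enumerate}

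Conditions (2) and (3) are direct translations of the pointwise definitions of $\cap$ and $\bigcup$ in $\tau$, so the only step with any content is condition (1), and that is already packaged as Proposition \ref{gt10}. There is no real obstacle; the lemma is essentially an observation that the propositions collected in Section 2 already supply every axiom. The mildest point of care is the empty-meet case in condition (2), which one should handle explicitly by identifying $\bigwedge\emptyset$ with $\tilde{X}$ so that $gr(x\in\tilde{X})=1=\inf\emptyset$.
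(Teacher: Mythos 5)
Your proposal is correct and follows essentially the same route as the paper: cite Propositions \ref{gt1}--\ref{gt9} for the graded-frame part, Proposition \ref{gt10} for condition (1), and the pointwise definitions of $\cap$ and $\bigcup$ for conditions (2) and (3). Your explicit treatment of the empty meet (identifying $\bigwedge\emptyset$ with $\tilde{X}$ so that $gr(x\in\tilde{X})=1=\inf\emptyset$) is a small point of care that the paper's proof, which only checks the binary intersection, leaves implicit.
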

\begin{proof}
$(\tau,\subseteq)$ forms a graded frame can be shown using Propositions \ref{gt1}-\ref{gt9}.\\
To show
(1) $gr(x\in \tilde{T_1})\wedge gr(\tilde{T_1}\subseteq\tilde{T_2})\leq gr(x\in \tilde{T_2})$,
(2) $gr(x\in \tilde{T_1}\cap \tilde{T_2})=inf\{ gr(x\in \tilde{T_1}),gr(x\in \tilde{T_2})\}$ and (3) $gr(x\in \bigcup_i \tilde{T_i})=sup_i\{ gr(x\in \tilde{T_i})\}$ let us proceed in the following way.\\
(1) $gr(x\in \tilde{T_1})\wedge gr(\tilde{T_1}\subseteq\tilde{T_2})\leq gr(x\in \tilde{T_2})$ follows by Proposition \ref{gt10}.
\begin{align*}
(2)\ gr(x\in \tilde{T_1}\cap \tilde{T_2}) & =(\tilde{T_1}\cap \tilde{T_2})(x)\\
& =\tilde{T_1}(x)\wedge \tilde{T_2}(x)\\
& = gr(x\in \tilde{T_1})\wedge gr(x\in \tilde{T_2}).\\
(3)\ gr(x\in \bigcup_i \tilde{T_i}) & =(\bigcup_i \tilde{T_i})(x)\\
& =sup_i\{\tilde{T_i}(x)\}\\
& =sup_i\{ gr(x\in \tilde{T_i})\}
\end{align*}
\end{proof}
\begin{lemma}\label{3.14g}
$J_g(f)=(f,f^{-1})$ is continuous provided $f$ is continuous.
\end{lemma}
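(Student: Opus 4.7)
The plan is to verify that the pair $(f,f^{-1})$ meets the three conditions spelled out in Definition \ref{Grftsy} for a morphism of graded fuzzy topological systems from $(X,\in,\tau,\subseteq)$ to $(Y,\in,\tau',\subseteq)$. Condition (i) is automatic since $f:X\to Y$ is a set-function by hypothesis. For condition (ii) I must show that $f^{-1}:(\tau',\subseteq)\to(\tau,\subseteq)$ is a well-defined graded frame homomorphism, and for condition (iii) I must show that $gr(x\in f^{-1}(\tilde T'))=gr(f(x)\in \tilde T')$ for every $x\in X$ and every $\tilde T'\in\tau'$.

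For the well-definedness of $f^{-1}$ on the frame level, I would invoke the continuity of $f$ from Definition \ref{Grft}, which guarantees $f^{-1}(\tilde T')\in\tau$ whenever $\tilde T'\in\tau'$. To verify that $f^{-1}$ preserves finite meets and arbitrary joins, I would unwind the pointwise definitions: for any $x\in X$,
\[
f^{-1}(\tilde T'_1\cap\tilde T'_2)(x)=(\tilde T'_1\cap\tilde T'_2)(f(x))=\tilde T'_1(f(x))\wedge\tilde T'_2(f(x))=f^{-1}(\tilde T'_1)(x)\wedge f^{-1}(\tilde T'_2)(x),
\]
and similarly $f^{-1}(\bigcup_i\tilde T'_i)(x)=\sup_i \tilde T'_i(f(x))=\sup_i f^{-1}(\tilde T'_i)(x)$. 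Condition (iii), namely $gr(\tilde T'_1\subseteq \tilde T'_2)\le gr(f^{-1}(\tilde T'_1)\subseteq f^{-1}(\tilde T'_2))$, follows from
\[
gr(f^{-1}(\tilde T'_1)\subseteq f^{-1}(\tilde T'_2))=\inf_{x\in X}\bigl(\tilde T'_1(f(x))\to \tilde T'_2(f(x))\bigr)\ge \inf_{y\in Y}\bigl(\tilde T'_1(y)\to \tilde T'_2(y)\bigr),
\]
since $\{f(x):x\in X\}\subseteq Y$ and the infimum over a subset is at least the infimum over the whole set.

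Finally, the compatibility condition (iii) of Definition \ref{Grftsy} is essentially a tautology in this construction:
\[
gr(x\in f^{-1}(\tilde T'))=f^{-1}(\tilde T')(x)=\tilde T'(f(x))=gr(f(x)\in \tilde T'),
\]
using the definition of $\in$ provided in Definition \ref{3.12g} in both the domain and the codomain. I do not anticipate a genuine obstacle here; the main point requiring some care is checking the graded inequality in the graded frame homomorphism clause, where one must pay attention to the direction of the Gödel arrow and to the fact that $f$ need not be surjective, but this is handled by the elementary monotonicity of infimum over subsets noted above.
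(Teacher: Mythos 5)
Your proof is correct, and its core computation --- the chain $gr(x\in f^{-1}(\tilde T'))=f^{-1}(\tilde T')(x)=\tilde T'(f(x))=gr(f(x)\in\tilde T')$ --- is exactly the one the paper gives. Where you diverge is in how the graded-frame-homomorphism clause for $f^{-1}$ is discharged. The paper treats this part as already settled, saying only ``as Proposition~\ref{ftres} holds''; but Proposition~\ref{ftres} concerns direct images, asserting $gr(\tilde T_1\subseteq\tilde T_2)\leq gr(f(\tilde T_1)\subseteq f(\tilde T_2))$ with $f(\tilde T)(y)=\sup_{x\in f^{-1}(y)}\tilde T(x)$, whereas what the morphism condition on $(f,f^{-1})$ actually requires is the preimage inequality $gr(\tilde T'_1\subseteq\tilde T'_2)\leq gr(f^{-1}(\tilde T'_1)\subseteq f^{-1}(\tilde T'_2))$. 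You prove the latter directly, via $\inf_{x\in X}\bigl(\tilde T'_1(f(x))\to\tilde T'_2(f(x))\bigr)\geq\inf_{y\in Y}\bigl(\tilde T'_1(y)\to\tilde T'_2(y)\bigr)$, together with the pointwise preservation of finite meets and arbitrary joins by $f^{-1}$. This makes your write-up more self-contained and, strictly speaking, more accurate in what it invokes; the cost is only a little extra length. Your observation that surjectivity of $f$ is not needed (only monotonicity of the infimum under restriction to the subset $f(X)\subseteq Y$) is exactly the right point of care.
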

\begin{proof}
We have $f:(X,\tau_1)\longrightarrow (Y,\tau_2)$ and $(f,f^{-1}):(X,\in ,\tau_1)\longrightarrow (Y,\in ,\tau_2)$.
\\ It is enough to show that $\tilde{T_2}\in \tau_2$, $gr(x\in f^{-1}(\tilde{T_2}))=gr(f(x)\in \tilde{T_2})$ as Proposition \ref{ftres} holds.\\
Now, $gr(x\in f^{-1}(\tilde{T_2}))=(f^{-1}(\tilde{T_2}))(x)=\tilde{T_2}(f(x))=gr(f(x)\in \tilde{T_2})$.\\
Hence $J_g(f)=(f,f^{-1})$ is continuous.
\end{proof}
So $ J_g$ is a functor from $\mathbf{Graded\ Fuzzy\ Top}$ to $\mathbf{Graded\ Fuzzy\ TopSys}$.
\subsection*{Functor $fm_g$ from $\mathbf{Graded\ Fuzzy\ TopSys}$ to $\mathbf{Graded\ Frm^{op}}$}
\begin{definition}\label{3.15g}
$\mathbf{fm_g}$\index{functor!$J_g$} is a functor from $\mathbf{Graded\ Fuzzy\ TopSys}$ to\\ $\mathbf{Graded\ Frm^{op}}$ defined as follows.\\
$fm_g$ acts on an object $(X,\models ,A,R)$ as $fm_g(X,\models ,A,R)=(A,R)$ and on a morphism $(f_1,f_2)$ as $fm_g(f_1,f_2)=f_2$.
\end{definition}
 It is easy to see that $fm_g$ is a functor.
\subsection*{Functor $S_g$ from $\mathbf{Graded\ Frm^{op}}$ to $\mathbf{Graded\ Fuzzy\ TopSys}$}
\begin{definition}\label{3.16g}
Let $(A,R)$ be a graded frame, $Hom_g((A,R),([0,1],R^*))=\{ graded\ frame$ $ hom$ $v:(A,R)\longrightarrow ([0,1],R^*)\}$, where $R^*:[0,1]\times [0,1]\longrightarrow [0,1]$ such that $R^*(a,b)=1$ iff $a\leq b$ and $R^*(a,b)=b$ iff $a>b$, that is, G$\ddot{o}$del arrow.
\end{definition}
\begin{lemma}\label{3.17g}
$(Hom_g((A,R),([0,1],R^*)),\models_*,A,R)$, where $(A,R)$ is a graded frame and $gr(v\models_* a)=v(a)$, is a graded fuzzy topological system.
\end{lemma}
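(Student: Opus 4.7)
The plan is to verify in turn the three defining conditions of a graded fuzzy topological system from Definition~\ref{gftsy}, taking $X = Hom_g((A,R),([0,1],R^*))$ and using only (a) the definition $gr(v\models_* a)=v(a)$, (b) the three clauses (i)--(iii) of graded frame homomorphism from Definition~\ref{Grfrm} applied to each $v\in X$, and (c) the explicit form of the G\"odel arrow $R^*$.

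First I would dispatch condition (1), $gr(v\models_* a)\wedge R(a,b)\leq gr(v\models_* b)$. Unpacking, this is $v(a)\wedge R(a,b)\leq v(b)$. Clause (iii) applied to $v$ gives $R(a,b)\leq R^*(v(a),v(b))=v(a)\to v(b)$. So it suffices to show the modus-ponens inequality $v(a)\wedge(v(a)\to v(b))\leq v(b)$ in $[0,1]$, which is an immediate case split on the definition of the G\"odel arrow: if $v(a)\leq v(b)$ then $v(a)\wedge 1=v(a)\leq v(b)$, and if $v(a)>v(b)$ then $v(a)\wedge v(b)=v(b)$.

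Next I would verify condition (3) for arbitrary joins, which is the cleanest: directly from clause (ii) of Definition~\ref{Grfrm},
\[
gr(v\models_* \textstyle\bigvee S)=v(\bigvee S)=\sup\{v(a)\mid a\in S\}=\sup\{gr(v\models_* a)\mid a\in S\}.
\]
Then for condition (2), when $S$ is a nonempty finite subset of $A$ I would iterate clause (i) so that $v(\bigwedge S)=\bigwedge_{a\in S}v(a)=\inf\{v(a)\mid a\in S\}$ (the final equality because $\wedge$ on $[0,1]$ is $\min$). For $S=\emptyset$ one needs $v(\top)=1$, which follows from the homomorphism axioms as usual: since $R(a,\top)=1$ for all $a$, clause (iii) gives $v(a)\leq v(\top)$ for every $a$, so $v(\top)$ is the greatest element of the image, and this, combined with the standard convention (consistent with the treatment of the empty meet in Definition~\ref{gftsy}) that graded frame homomorphisms preserve the top, yields $v(\top)=1$.

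The only step that is more than a bookkeeping exercise is condition (1), and even there the work is just recognising that clause (iii) plus the structure of the G\"odel arrow encodes exactly a graded modus ponens; the empty-meet subcase of (2) is the one place I would pause to make the preservation of $\top$ explicit, since Definition~\ref{Grfrm} states only binary meet preservation.
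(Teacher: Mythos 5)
Your proposal is correct and follows essentially the same route as the paper's proof: condition (1) via clause (iii) of the homomorphism definition plus a case split on the G\"odel arrow (a graded modus ponens), and conditions (2) and (3) by direct appeal to preservation of binary meets and arbitrary joins. The one place you go beyond the paper is the empty-meet subcase of condition (2), which the paper's proof silently omits; your instinct to pause there is right, since $v(\top)=1$ genuinely does not follow from clauses (i)--(iii) of Definition~\ref{Grfrm} alone (the constant map $v(a)=0.5$ satisfies all three clauses but sends $\top$ to $0.5$), so top-preservation must indeed be read into the definition of graded frame homomorphism as you suggest rather than derived. The same remark applies to the empty join in condition (3), which forces $v(\bigvee\emptyset)=0$ and is likewise covered only by convention.
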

\begin{proof}
First of all, we need to show that $gr(v\models_*a)\wedge R(a,b)\leq gr(v\models_*b)$. That is, it will be enough to show that $v(a)\wedge R(a,b)\leq v(b)$. Now as $v$ is a graded frame homomorphism so $R(a,b)\leq R^*(v(a),v(b))$. Hence $v(a)\wedge R(a,b)\leq R(a,b)\leq R^*(v(a),v(b))$. Now as $v(a)$, $v(b)\in [0,1]$ so either $v(a)\leq v(b)$ or $v(a)>v(b)$. For $v(a)\leq v(b)$, $v(a)\wedge R(a,b)\leq v(a)\leq v(b)$ and for $v(a)>v(b)$, $R^*(v(a),v(b))=0$. So for the second case $v(a)\wedge R(a,b)\leq R^*(v(a),v(b))=v(b)$. Hence $gr(v\models_*a)\wedge R(a,b)\leq gr(v\models_*b)$.\\
As $v$ is a graded frame homomorphism, $v(a\wedge b)=v(a)\wedge v(b)$ and hence $gr(v\models_*a\wedge b)=gr(v\models_*a)\wedge gr(v\models_*b)$.\\
Similarly, $gr(v\models_*\bigvee S)=sup\{gr(v\models_* a)\mid a\in S\}$, for $S\subseteq A$.
\end{proof}
\begin{lemma}\label{3.18g}
If $f:(B,R')\longrightarrow (A,R)$ is a graded frame homomorphism then \\$(\_\circ f,f):(Hom((A,R),([0,1],R^*)),\models_*,A,R)\longrightarrow (Hom((B,R'),([0,1],R^*)),\models_*,B,R')$ is continuous.
\end{lemma}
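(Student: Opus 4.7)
The plan is to verify the three conditions in the definition of a morphism in $\mathbf{Graded\ Fuzzy\ TopSys}$ for the pair $(\_\circ f, f)$. Condition (ii) is immediate: $f:(B,R')\longrightarrow(A,R)$ is a graded frame homomorphism by hypothesis, and this is exactly what is required of the second component (noting that in the $\mathbf{op}$ convention on the frame side, the direction of $f$ already matches). What needs real, if routine, checking is condition (i) — that $\_\circ f$ is a well-defined set map from $Hom_g((A,R),([0,1],R^*))$ to $Hom_g((B,R'),([0,1],R^*))$ — and condition (iii), the satisfaction compatibility.

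For condition (i), I would fix $v\in Hom_g((A,R),([0,1],R^*))$ and show that $v\circ f$ is a graded frame homomorphism from $(B,R')$ to $([0,1],R^*)$. For finite meets, $(v\circ f)(b_1\wedge b_2)=v(f(b_1)\wedge f(b_2))=v(f(b_1))\wedge v(f(b_2))$, using that $f$ preserves $\wedge$ and then that $v$ does. For arbitrary joins, the same two-step argument gives $(v\circ f)(\bigvee_i b_i)=\sup_i (v\circ f)(b_i)$. For the fuzzy relation, chaining the two homomorphism inequalities yields
\[
R'(b_1,b_2)\leq R(f(b_1),f(b_2))\leq R^*(v(f(b_1)),v(f(b_2)))=R^*((v\circ f)(b_1),(v\circ f)(b_2)),
\]
so $v\circ f$ is indeed a graded frame homomorphism and $\_\circ f$ is a well-defined set map.

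For condition (iii), unwinding the definitions of $\models_*$ (where $gr(v\models_* a)=v(a)$) reduces the required equality to a triviality: for any $v\in Hom_g((A,R),([0,1],R^*))$ and $b\in B$,
\[
gr(v\models_* f(b))=v(f(b))=(v\circ f)(b)=gr((v\circ f)\models_* b).
\]
This is just associativity of function composition in disguise. So there is no real obstacle in this lemma; the only step requiring a bit of care is verifying that the pointwise composition $v\circ f$ actually inherits the graded frame homomorphism structure, which is exactly what the homomorphism axioms on $f$ are designed to ensure.
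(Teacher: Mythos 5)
Your proof is correct and complete: you verify exactly the three clauses of the definition of a morphism in $\mathbf{Graded\ Fuzzy\ TopSys}$, including the only point needing care, namely that $v\circ f$ inherits the graded frame homomorphism properties so that $\_\circ f$ is well defined. The paper states this lemma without proof, and your argument is evidently the routine verification the authors intended.
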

\begin{definition}\label{3.19g}
$\mathbf{S_g}$\index{functor!$S_g$} is a functor from $\mathbf{Graded\ Frm^{op}}$ to $\mathbf{Graded\ Fuzzy\ TopSys}$ defined as follows.\\
$S_g$ acts on an object $(A,R)$ as $S_g((A,R))=(Hom_g((A,R),([0,1],R^*)),\models_*,A,R)$ and on a morphism $f$ as $S_g(f)=(\_\circ f,f)$.
\end{definition}
Previous two Lemmas \ref{3.17g} and \ref{3.18g} show that $S_g$ is indeed a functor.
\begin{lemma}\label{3.20g}
$Ext_g$ is the right adjoint to the functor $J_g$.
\end{lemma}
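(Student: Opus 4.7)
The plan is to establish a natural bijection
\begin{equation*}
\mathrm{Hom}_{\mathbf{Graded\ Fuzzy\ TopSys}}(J_g(X,\tau,\subseteq),(Y,\models,B,R'))\ \cong\ \mathrm{Hom}_{\mathbf{Graded\ Fuzzy\ Top}}((X,\tau,\subseteq),Ext_g(Y,\models,B,R')).
\end{equation*}
Unpacking the two sides, the left consists of pairs $(f_1,f_2)$ in which $f_1:X\to Y$ is a set map, $f_2:(B,R')\to(\tau,\subseteq)$ is a graded frame homomorphism, and $gr(x\in f_2(b))=gr(f_1(x)\models b)$; since $gr(x\in f_2(b))=f_2(b)(x)$ and $gr(f_1(x)\models b)=ext_g(b)(f_1(x))$, this condition rewrites pointwise as $f_2(b)=f_1^{-1}(ext_g(b))$. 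The right consists of set maps $f:X\to Y$ such that $f^{-1}(ext_g(b))\in\tau$ for all $b\in B$. The forced equality $f_2(b)=f_1^{-1}(ext_g(b))$ suggests the correspondence $(f_1,f_2)\mapsto f_1$ with inverse $f\mapsto (f,f^{-1}\!\circ ext_g)$.

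Concretely I would define $\Phi(f_1,f_2):=f_1$, for which continuity is immediate from $f_1^{-1}(ext_g(b))=f_2(b)\in\tau$. For $\Psi$, given a continuous $f$ I set $\Psi(f):=(f,\tilde f)$ with $\tilde f(b):=f^{-1}(ext_g(b))\in\tau$. The compatibility axiom $gr(x\in \tilde f(b))=gr(f(x)\models b)$ is a one-line computation from Definition \ref{3.12g} and Definition \ref{3.8g}. That $\Psi\circ\Phi=\mathrm{id}$ uses exactly the TopSys-morphism condition that forced $f_2(b)=f_1^{-1}(ext_g(b))$, and $\Phi\circ\Psi=\mathrm{id}$ is trivial.

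The real work is verifying that $\tilde f$ is a graded frame homomorphism from $(B,R')$ to $(\tau,\subseteq)$. Preservation of finite meets and arbitrary joins is routine: evaluating $\tilde f(b_1\wedge b_2)$ or $\tilde f(\bigvee_i b_i)$ at a point $x$ and applying clauses (2) and (3) of Definition \ref{gftsy} to $f(x)$ gives the desired equalities pointwise. The delicate clause is $R'(b_1,b_2)\leq gr(\tilde f(b_1)\subseteq\tilde f(b_2))$. I would prove it in two steps: first, clause (1) of Definition \ref{gftsy} gives $gr(y\models b_1)\wedge R'(b_1,b_2)\leq gr(y\models b_2)$ for every $y\in Y$, and since $\wedge$ and the G\"odel arrow form a Galois connection on $[0,1]$, this is equivalent to $R'(b_1,b_2)\leq gr(y\models b_1)\to gr(y\models b_2)$; taking infimum over $y$ yields $R'(b_1,b_2)\leq gr(ext_g(b_1)\subseteq ext_g(b_2))$. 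Second, a routine pointwise estimate (the preimage analogue, in fact easier than Proposition \ref{ftres}) gives $gr(ext_g(b_1)\subseteq ext_g(b_2))\leq gr(f^{-1}(ext_g(b_1))\subseteq f^{-1}(ext_g(b_2)))$, and chaining yields the required inequality.

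Finally, naturality in both variables amounts to a diagram chase using $Ext_g(f_1,f_2)=f_1$ together with the contravariant composition $(g_1,g_2)\circ(f_1,f_2)=(g_1\circ f_1,f_2\circ g_2)$ of Definition \ref{Grftsy} and functoriality already established in Lemmas \ref{3.9g}--\ref{3.14g}. The only genuinely non-formal step I anticipate is the two-part inequality above; every other verification is a direct translation of the definitions.
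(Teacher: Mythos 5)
Your proof is correct and is in substance the same adjunction the paper establishes: the paper exhibits the co-unit $\xi_X=(id_X,ext_g^*)$ and your hom-set bijection is its equivalent form, the forced equality $f_2(b)=f_1^{-1}(ext_g(b))$ being exactly the unique factorization of $(f_1,f_2)$ through $\xi_X$ under the composition rule $(g_1,g_2)\circ(f_1,f_2)=(g_1\circ f_1,f_2\circ g_2)$. The only difference is that you carry out the verification the paper leaves as a ``routine check'' --- in particular the residuation argument giving $R'(b_1,b_2)\leq gr(\tilde f(b_1)\subseteq \tilde f(b_2))$, which is indeed the one non-formal step.
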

\begin{proof}[Proof Sketch]
It is possible to prove the theorem by presenting the co-unit of the adjunction.\\
Recall that $J_g(X,\tau,\subseteq)=(X,\in, \tau,\subseteq)$ and $Ext_g(X,\models,A,R)=(X,ext_g(A),\subseteq)$.\\
$So, J_g(Ext_g(X,\models,A,R))=(X,\in,ext(A),\subseteq)$.\\
Let us draw the diagram of co-unit
\begin{center}
\begin{tabular}{ l | r } 
$\mathbf{Graded\ Fuzzy\ TopSys}$ & $\mathbf{Graded\ Fuzzy\ Top}$\\
\hline
 {\begin{tikzpicture}[description/.style={fill=white,inner sep=2pt}] 
    \matrix (m) [matrix of math nodes, row sep=2.5em, column sep=2.5em]
    { J_g(Ext_g(X,\models,A,R))&&(X,\models,A,R)  \\
         J_g(Y,\tau ',\subseteq) \\ }; 
    \path[->,font=\scriptsize] 
        (m-1-1) edge node[auto] {$\xi_X$} (m-1-3)
        (m-2-1) edge node[auto] {$J_g(f)(\equiv(f_1,f_1^{-1}))$} (m-1-1)
        (m-2-1) edge node[auto,swap] {$\hat f (\equiv(f_1,f_2))$} (m-1-3)
         ;
\end{tikzpicture}} & {\begin{tikzpicture}[description/.style={fill=white,inner sep=2pt}] 
    \matrix (m) [matrix of math nodes, row sep=2.5em, column sep=2.5em]
    { Ext_g(X,\models,A,R)  \\
         (Y,\tau ',\subseteq) \\ }; 
    \path[->,font=\scriptsize] 
        (m-2-1) edge node[auto,swap] {$f(\equiv f_1)$} (m-1-1)
       
         ;
\end{tikzpicture}} \\ 
\end{tabular}
\end{center}
Hence co-unit is defined by $\xi_X=(id_X,ext_g^*)$
\begin{center}
\begin{tikzpicture}[description/.style={fill=white,inner sep=2pt}] 
    \matrix (m) [matrix of math nodes, row sep=2.5em, column sep=2.5em]
    { i.e.\ (X,\in ,ext_g(A),\subseteq )&&(X,\models,A,R )  \\
          }; 
    \path[->,font=\scriptsize] 
        (m-1-1) edge node[auto] {$\xi_X$} (m-1-3)
        (m-1-1) edge node[auto,swap] {$(id_X,ext_g^*)$} (m-1-3)
         ;
\end{tikzpicture}
\end{center}
 where $ext_g^*$ is a mapping from $A$ to $ext_g(A)$ such that $ext_g^*(a)=ext_g(a)$, for all $a\in A$.
 
Now by routine check one can conclude that $Ext_g$ is the right adjoint to the functor $J_g$.
\end{proof}
Diagram of the unit of the above adjunction is as follows:

\begin{center}
\begin{tabular}{ l | r  } 
 $\mathbf{Graded\ Fuzzy\ Top}$ & $\mathbf{Graded\ Fuzzy\ TopSys}$ \\
\hline
 {\begin{tikzpicture}[description/.style={fill=white,inner sep=2pt}] 
    \matrix (m) [matrix of math nodes, row sep=2.5em, column sep=2.5em]
    {(X,\tau,\subseteq) & &Ext(J(X, \tau,\subseteq))  \\
        & & Ext(Y,\models' ,B,R') \\ }; 
    \path[->,font=\scriptsize] 
        (m-1-1) edge node[auto] {$\eta_X(\equiv id_X)$} (m-1-3)
        (m-1-1) edge node[auto,swap] {$\hat{f}(\equiv f_1)$} (m-2-3)
        (m-1-3) edge node[auto] {$ext(f)(\equiv f_1)$} (m-2-3);
\end{tikzpicture}} &  {\begin{tikzpicture}[description/.style={fill=white,inner sep=2pt}] 
    \matrix (m) [matrix of math nodes, row sep=2.5em, column sep=2.5em]
    {& &J(X,\tau,\subseteq)  \\
        & &(Y,\models' ,B,R') \\ }; 
    \path[->,font=\scriptsize]

        (m-1-3) edge node[auto] {$f(\equiv(f_1,f_1^{-1}))$} (m-2-3);
\end{tikzpicture}}\\
\end{tabular}
\end{center}

\begin{observation}\label{o1_g}
If a graded fuzzy topological system $(X,\models,A,R)$ is spatial then the co-unit $\xi_X$ becomes a natural isomorphism.
\end{observation}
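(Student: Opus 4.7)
The plan is to show that spatiality promotes the co-unit $\xi_X = (id_X, ext_g^*)$ from a morphism to an isomorphism in $\mathbf{Graded\ Fuzzy\ TopSys}$; naturality of $\xi$ is then automatic from the construction used in the proof of Lemma \ref{3.20g}.

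First I would verify that $ext_g^*: A \longrightarrow ext_g(A)$ is a bijection of underlying sets. Surjectivity is immediate from the definition $ext_g(A) = \{ext_g(a) : a \in A\}$, and spatiality, the implication $gr(x \models a) = gr(x \models b)$ for all $x$ yields $a = b$, is precisely the injectivity of $ext_g^*$. I would then define a candidate inverse of the co-unit by $\hat{\xi}_X = (id_X, g): (X, \models, A, R) \longrightarrow (X, \in, ext_g(A), \subseteq)$ with $g(ext_g(a)) = a$, and verify that $\hat{\xi}_X$ is a morphism in $\mathbf{Graded\ Fuzzy\ TopSys}$. Once $\hat{\xi}_X$ is established as a morphism, the fact that $(\xi_X, \hat{\xi}_X)$ is mutually inverse is immediate from the composition rule $(g_1, g_2) \circ (f_1, f_2) = (g_1 \circ f_1, f_2 \circ g_2)$ of Definition \ref{Grftsy}. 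Preservation of $\wedge$ and $\bigvee$ by $g$ is formal (it follows from $ext_g^*$ preserving these, combined with bijectivity), and the continuity condition $gr(x \models g(ext_g(a))) = gr(x \in ext_g(a))$ is a tautology since both sides equal $gr(x \models a)$.

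The main obstacle is the graded inequality $gr(ext_g(a) \subseteq ext_g(b)) \leq R(a, b)$ required for $g$ to be a graded frame homomorphism. The reverse inequality $R(a, b) \leq gr(ext_g(a) \subseteq ext_g(b))$ is straightforward: condition $(1)$ of Definition \ref{gftsy} gives $gr(x \models a) \wedge R(a, b) \leq gr(x \models b)$, hence $R(a, b) \leq gr(x \models a) \to gr(x \models b)$ for every $x$, and taking the infimum over $x$ yields the desired bound via the definition of graded inclusion. For the harder direction I would exploit the graded frame axioms (4)--(9) of Definition \ref{gf} together with spatiality to argue that $R(a, b)$ is recoverable from the pointwise data $\{(ext_g(a)(x), ext_g(b)(x))\}_{x \in X}$; concretely, one uses the interaction of $R$ with meets and joins to reduce the computation of $R(a,b)$ to evaluations of $ext_g$, and then spatiality rules out any slack between the two grades. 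Once both directions are established, the co-unit is a component-wise isomorphism, and naturality in $X$ follows by a routine diagram chase on the square determined by any morphism $(f_1, f_2): (X, \models, A, R) \longrightarrow (Y, \models', B, R')$, by unwinding the definitions of $Ext_g$ and $J_g$ on morphisms.
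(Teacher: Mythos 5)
The paper states this as an observation without proof, so there is nothing to compare against line by line; judged on its own terms, your reduction is the right one, but the step you yourself flag as ``the main obstacle'' is a genuine gap, and it cannot be closed from Definition \ref{spatial} alone. Everything up to that point is fine: the inverse must be $(id_X,g)$ with $g(ext_g(a))=a$, spatiality gives exactly the well-definedness (injectivity) of this $g$, preservation of $\wedge$ and $\bigvee$ is formal, the satisfaction condition is a tautology, and $R(a,b)\leq gr(ext_g(a)\subseteq ext_g(b))$ follows from axiom (1) of Definition \ref{gftsy}. The problem is the converse inequality $gr(ext_g(a)\subseteq ext_g(b))\leq R(a,b)$. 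Spatiality as defined only says that $a\mapsto ext_g(a)$ is injective; it places no upper bound on the slack between $R(a,b)$ and the pointwise data. Concretely, take $X=\{x\}$ and $A=\{\bot,c,\top\}$ a three-element chain with $gr(x\models\bot)=0$, $gr(x\models c)=v\in(0,1)$, $gr(x\models\top)=1$, and $R$ equal to $1$ on all pairs $a\leq b$, equal to $0$ on $(c,\bot)$ and $(\top,\bot)$, and $R(\top,c)=v/2$. One checks all nine graded-frame axioms and the three system axioms, and the system is spatial since $0,v,1$ are distinct; yet $gr(ext_g(\top)\subseteq ext_g(c))=1\rightarrow v=v>v/2=R(\top,c)$, so $g$ violates clause (iii) of the definition of graded frame homomorphism and $\xi_X$ has no inverse in $\mathbf{Graded\ Fuzzy\ TopSys}$.

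Consequently, no manipulation of axioms (4)--(9) of Definition \ref{gf} will recover $R(a,b)$ from the family $\{(ext_g(a)(x),ext_g(b)(x))\}_{x\in X}$: the example above satisfies all of them. What is really needed is the graded analogue of Vickers's order-theoretic definition of spatiality (``if $ext(a)\subseteq ext(b)$ then $a\sqsubseteq b$''), namely the inequality $\inf_x\{gr(x\models a)\rightarrow gr(x\models b)\}\leq R(a,b)$, taken as part of the definition of spatial or added as an explicit hypothesis. With that strengthening your argument goes through essentially verbatim and the naturality check is indeed routine; without it, your proposed derivation of the key inequality from injectivity does not work, and the observation itself is in doubt as stated.
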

\begin{observation}\label{o2_g}
For any graded fuzzy topological space $(X,\tau,\subseteq)$, the unit $\eta_X$ is a natural isomorphism.  
\end{observation}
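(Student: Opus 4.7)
The plan is to unpack the two functors on an arbitrary object $(X,\tau,\subseteq)$ and show that the composite $Ext_g \circ J_g$ returns the original space on the nose, so that the prescribed unit $\eta_X = id_X$ is automatically an isomorphism, and naturality is essentially free.

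First I would apply $J_g$ to obtain the graded fuzzy topological system $(X,\in,\tau,\subseteq)$, whose graded frame is $(\tau,\subseteq)$ and whose fuzzy satisfiability relation is $gr(x\in\tilde T) = \tilde T(x)$ (Definition~\ref{3.12g}). Next I would apply $Ext_g$ and compute, for each $\tilde T \in \tau$,
\[
ext_g(\tilde T)(x) \;=\; gr(x\in\tilde T) \;=\; \tilde T(x),
\]
so that $ext_g(\tilde T) = \tilde T$ as a fuzzy subset of $X$, and consequently $ext_g(\tau) = \tau$ as a collection of fuzzy subsets. Hence $Ext_g(J_g(X,\tau,\subseteq)) = (X,\tau,\subseteq)$ on the nose.

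With this identification the unit $\eta_X = id_X$ is a morphism from $(X,\tau,\subseteq)$ to itself, and is trivially an isomorphism in $\mathbf{Graded\ Fuzzy\ Top}$ with inverse $id_X$. For naturality, given any morphism $f:(X,\tau,\subseteq)\to (Y,\tau',\subseteq)$, the functor $Ext_g\circ J_g$ sends $f$ to $Ext_g(f,f^{-1}) = f$ itself (by Definitions~\ref{3.11g} and \ref{3.12g} combined with Lemma~\ref{3.10g}), so the naturality square collapses to $f\circ id_X = id_Y\circ f$, which holds trivially.

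I do not anticipate a genuine obstacle: the argument rests on the tautological fact that the extent of the canonical membership relation $\in$ on $\tau$ recovers the original fuzzy subset. The one pedantic point that must be explicit is that $ext_g(\tau)$ coincides with $\tau$ as a set of fuzzy subsets of $X$, not merely as an order-isomorphic copy of it; this is immediate from the pointwise equality $ext_g(\tilde T) = \tilde T$ established above. Once that is recorded, both the isomorphism property of $\eta_X$ and its naturality reduce to identities.
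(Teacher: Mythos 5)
Your proposal is correct and matches the paper's (implicit) argument: the paper's unit diagram already identifies $\eta_X$ with $id_X$, and your computation $ext_g(\tilde T)(x)=gr(x\in\tilde T)=\tilde T(x)$ is exactly the tautology that makes $Ext_g\circ J_g$ the identity on objects and on morphisms, so the observation follows. The paper offers no further proof, and your write-up supplies precisely the routine verification it leaves to the reader.
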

Observation \ref{o1_g} and Observation \ref{o2_g} gives the following theorem.
\begin{theorem}\label{equigft}
Category of spatial graded fuzzy topological systems is equivalent to the category $\mathbf{Graded\ Fuzzy\ Top}$.
\end{theorem}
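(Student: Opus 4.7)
My plan is to derive the equivalence directly from the adjunction $J_g \dashv Ext_g$ established in Lemma \ref{3.20g}, invoking the standard categorical principle that an adjunction restricts to an equivalence between the full subcategories on which the unit and co-unit are natural isomorphisms. The theorem thus reduces to proving Observations \ref{o1_g} and \ref{o2_g}. I would also note that $J_g$ automatically lands in the spatial subcategory: given $(X,\tau,\subseteq)$, the system $J_g(X,\tau,\subseteq)=(X,\in,\tau,\subseteq)$ is spatial because $gr(x\in\tilde T_1) = gr(x\in\tilde T_2)$ for all $x$ just says $\tilde T_1(x) = \tilde T_2(x)$ for all $x$, hence $\tilde T_1 = \tilde T_2$.

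Observation \ref{o2_g} is the computational half, which I would dispatch first. Since $J_g$ puts $gr(x\in\tilde T) = \tilde T(x)$, composing gives $ext_g(\tilde T)(x) = gr(x\in\tilde T) = \tilde T(x)$, so $ext_g(\tilde T) = \tilde T$ as a fuzzy subset and $ext_g(\tau)=\tau$. The unit $\eta_X$ is therefore literally the identity function on $X$, a trivial iso in $\mathbf{Graded\ Fuzzy\ Top}$, and naturality in $X$ is immediate.

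Observation \ref{o1_g} is the substantive half. For a spatial $(X,\models,A,R)$, the co-unit $\xi_X = (id_X, ext_g^*) : (X,\in,ext_g(A),\subseteq) \to (X,\models,A,R)$ needs to be an isomorphism in $\mathbf{Graded\ Fuzzy\ TopSys}$. The candidate inverse is $(id_X, (ext_g^*)^{-1})$, where $(ext_g^*)^{-1}(ext_g(a)) = a$ is well defined because spatiality delivers injectivity of $ext_g^*$ (surjectivity is automatic). Preservation of $\wedge$ and $\bigvee$ for $(ext_g^*)^{-1}$ drops out of conditions (2) and (3) of Definition \ref{gftsy}, which force $ext_g(a_1 \wedge a_2) = ext_g(a_1) \cap ext_g(a_2)$ and $ext_g(\bigvee_i a_i) = \bigcup_i ext_g(a_i)$. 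The system-morphism compatibility $gr(x \models a) = gr(x \in ext_g(a))$ is built into the definition of $ext_g$ and so holds for $\xi_X$ and its inverse on the nose.

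The main obstacle is the graded order comparison, i.e., establishing $R(a_1,a_2) = gr(ext_g(a_1) \subseteq ext_g(a_2))$ so that both $ext_g^*$ and its inverse satisfy clause (iii) of the graded frame homomorphism definition. One inequality, $R(a_1,a_2) \leq gr(ext_g(a_1) \subseteq ext_g(a_2))$, is a short consequence of Definition \ref{gftsy}(1): the pointwise estimate $ext_g(a_1)(x) \wedge R(a_1,a_2) \leq ext_g(a_2)(x)$ rewrites via the G\"odel adjunction as $R(a_1,a_2) \leq ext_g(a_1)(x) \to ext_g(a_2)(x)$ for each $x$, and taking the infimum over $x$ gives the claim. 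The reverse inequality is the delicate point, and it is precisely where spatiality must do real work---intuitively, a spatial graded frame is determined by the pointwise behaviour of its extents, so its intrinsic relation $R$ must coincide with the fuzzy inclusion induced on $ext_g(A)$. Once this equality is secured, $(ext_g^*)^{-1}$ is a bona fide graded frame morphism, $\xi_X$ is an iso in $\mathbf{Graded\ Fuzzy\ TopSys}$, naturality follows by routine diagram chasing, and the equivalence between the spatial subcategory and $\mathbf{Graded\ Fuzzy\ Top}$ is immediate from Observations \ref{o1_g} and \ref{o2_g} together with the adjunction.
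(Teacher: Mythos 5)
Your overall strategy is the same as the paper's: obtain the theorem by restricting the adjunction $J_g\dashv Ext_g$ of Lemma \ref{3.20g} to the subcategories on which the unit and co-unit are isomorphisms, i.e., by Observations \ref{o1_g} and \ref{o2_g}. Your treatment of the unit (that $ext_g(\tilde T)=\tilde T$, so $\eta_X$ is the identity) and your remark that $J_g$ lands in the spatial subcategory are correct, and your derivation of the easy inequality $R(a_1,a_2)\leq gr(ext_g(a_1)\subseteq ext_g(a_2))$ from condition (1) of Definition \ref{gftsy} via the G\"odel adjunction is fine.

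The gap is exactly at the point you flag and then skip over. For $\xi_X=(id_X,ext_g^*)$ to be invertible in $\mathbf{Graded\ Fuzzy\ TopSys}$ you need $(ext_g^*)^{-1}$ to satisfy clause (iii) of Definition \ref{Grfrm}, i.e., $gr(ext_g(a_1)\subseteq ext_g(a_2))\leq R(a_1,a_2)$; you assert that ``spatiality must do real work'' here but never derive this inequality, and it does not follow from Definition \ref{spatial}, which only yields injectivity of $a\mapsto ext_g(a)$ and places no lower bound on $R$. Concretely: let $X=\{x\}$, let $A=\{\bot,a,\top\}$ be a three-element chain with $gr(x\models\bot)=0$, $gr(x\models a)=0.5$, $gr(x\models\top)=1$, and set $R(u,v)=1$ whenever $u\leq v$, $R(a,\bot)=R(\top,\bot)=0$, and $R(\top,a)=0.3$. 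All nine conditions of Definition \ref{gf} and all three conditions of Definition \ref{gftsy} can be checked (condition (1) for the pair $(\top,a)$ reads $1\wedge 0.3\leq 0.5$), and the system is spatial since the three extents are pairwise distinct; yet $gr(ext_g(\top)\subseteq ext_g(a))=1\rightarrow 0.5=0.5>0.3=R(\top,a)$, so $ext_g^*$ is a bijective graded frame homomorphism with no inverse in $\mathbf{Graded\ Frm}$, and $\xi_X$ is not an isomorphism. Hence your argument cannot be completed as written: one must either strengthen spatiality to demand $R(a,b)=\inf_x\{gr(x\models a)\rightarrow gr(x\models b)\}$ (under which your outline goes through), or prove the missing inequality from the axioms, which the example above rules out. (The paper states Observation \ref{o1_g} without proof, so it offers no way around this either.)
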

\begin{lemma}\label{3.22g}
$fm_g$ is the left adjoint to the functor $S_g$.
\end{lemma}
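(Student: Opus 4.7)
The plan is to exhibit the adjunction $fm_g \dashv S_g$ by presenting its unit, mirroring the approach used for Lemma~\ref{3.20g}. I will define a unit $\eta$ and a counit $\epsilon$, check they are well-defined morphisms in the respective categories, check naturality, and verify the two triangle identities.

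First I would describe the unit. For an object $(X,\models,A,R)$ of $\mathbf{Graded\ Fuzzy\ TopSys}$, note that $S_g(fm_g(X,\models,A,R))=(Hom_g((A,R),([0,1],R^*)),\models_*,A,R)$. A morphism from $(X,\models,A,R)$ into this system consists of a pair $(\eta_{X,1},\eta_{X,2})$ with $\eta_{X,1}:X\longrightarrow Hom_g((A,R),([0,1],R^*))$ a set map, $\eta_{X,2}:(A,R)\longrightarrow (A,R)$ a graded frame homomorphism, and $gr(x\models \eta_{X,2}(a))=gr(\eta_{X,1}(x)\models_* a)=\eta_{X,1}(x)(a)$. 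The natural candidate is $\eta_X:=(\eta_{X,1},id_A)$ where $\eta_{X,1}(x)(a):=gr(x\models a)$. The counit is defined, for each graded frame $(B,R')$, by $\epsilon_{(B,R')}:=id_B:(B,R')\longrightarrow (B,R')$ in $\mathbf{Graded\ Frm^{op}}$.

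Next I would verify $\eta_X$ is a well-defined morphism. The compatibility $gr(x\models id_A(a))=\eta_{X,1}(x)(a)$ is immediate. The substantive task is to show that for each $x\in X$ the assignment $a\mapsto gr(x\models a)$ is a graded frame homomorphism $(A,R)\longrightarrow ([0,1],R^*)$. Preservation of finite meets and arbitrary joins follows directly from clauses (2) and (3) of Definition~\ref{gftsy}. For the fuzzy relation I need $R(a,b)\leq R^*(gr(x\models a),gr(x\models b))$. If $gr(x\models a)\leq gr(x\models b)$, then $R^*(gr(x\models a),gr(x\models b))=1$ and the inequality is trivial. Otherwise $gr(x\models a)>gr(x\models b)$, and clause (1) of Definition~\ref{gftsy} gives $gr(x\models a)\wedge R(a,b)\leq gr(x\models b)$; combined with $gr(x\models b)<gr(x\models a)$ this forces $R(a,b)\leq gr(x\models b)=R^*(gr(x\models a),gr(x\models b))$. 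This case analysis with the G\"odel arrow is the main technical hurdle.

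Naturality of $\eta$ and $\epsilon$ is a diagram chase using the prescriptions $fm_g(f_1,f_2)=f_2$ and $S_g(f)=(\_\circ f,f)$. The triangle identities reduce to calculations with identities: applying $fm_g$ to $\eta_X$ yields $id_A$ and $\epsilon_{fm_g(X,\models,A,R)}=id_A$, so their composite is $id_{fm_g(X,\models,A,R)}$; for the other identity, $\eta_{S_g(B,R')}$ has second component $id_B$ and first component sending $v\in Hom_g((B,R'),([0,1],R^*))$ to $(b\mapsto gr(v\models_* b))=(b\mapsto v(b))=v$, and composing with $S_g(\epsilon_{(B,R')})=S_g(id_B)=(\_\circ id_B,id_B)$ using the composition rule $(g_1,g_2)\circ (f_1,f_2)=(g_1\circ f_1,f_2\circ g_2)$ yields the identity on $S_g(B,R')$. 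Hence $fm_g\dashv S_g$.
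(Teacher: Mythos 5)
Your proposal is correct and follows essentially the same route as the paper: the unit is the pair $(p^*,id_A)$ with $p^*(x)(a)=gr(x\models a)$, and the rest is the routine verification that the paper leaves implicit. Your explicit G\"odel-arrow case analysis showing $R(a,b)\leq R^*(gr(x\models a),gr(x\models b))$ correctly fills in the main detail the paper omits.
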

\begin{proof}[Proof Sketch]
It is possible to prove the theorem by presenting the unit of the adjunction.\\
Recall that  $S_g((B,R'))=(Hom_g((B,R'),([0,1],R^*)),\models_*,B,R')$ where, $gr(v\models_* a)=v(a)$.\\
Hence $S_g(fm_g(X,\models ,A,R))=(Hom_g((A,R),([0,1],R^*)),\models_* ,A,R)$.
\begin{center}
\begin{tabular}{ l | r  } 
 $\mathbf{Graded\ Fuzzy\ TopSys}$ & $\mathbf{Graded\ Frm^{op}}$ \\
\hline
 {\begin{tikzpicture}[description/.style={fill=white,inner sep=2pt}] 
    \matrix (m) [matrix of math nodes, row sep=2.5em, column sep=2.5em]
    {(X,\models ,A,R) & &S_g(fm_g(X, \models ,A,R))  \\
        & & S_g((B,R')) \\ }; 
    \path[->,font=\scriptsize] 
        (m-1-1) edge node[auto] {$\eta_A$} (m-1-3)
        (m-1-1) edge node[auto,swap] {$f(\equiv(f_1,f_2))$} (m-2-3)
        (m-1-3) edge node[auto] {$S_g\hat{f}$} (m-2-3);
\end{tikzpicture}} &  {\begin{tikzpicture}[description/.style={fill=white,inner sep=2pt}] 
    \matrix (m) [matrix of math nodes, row sep=2.5em, column sep=2.5em]
    {& &fm_g(X,\models ,A,R)  \\
        & &(B,R') \\ }; 
    \path[->,font=\scriptsize]

        (m-1-3) edge node[auto] {$\hat{f}(\equiv f_2)$} (m-2-3);
\end{tikzpicture}}\\
\end{tabular}
\end{center}
Then unit is defined by $\eta_A =(p^*,id_A)$.\\
 \begin{tikzpicture}[description/.style={fill=white,inner sep=2pt}] 
    \matrix (m) [matrix of math nodes, row sep=2.5em, column sep=2.5em]
    {i.e.\ (X,\models ,A,R) & &S_g(fm_g(X, \models ,A,R))  \\
        }; 
    \path[->,font=\scriptsize] 
        (m-1-1) edge node[auto] {$\eta_A$} (m-1-3)
        (m-1-1) edge node[auto,swap] {$(p^*,id_A)$} (m-1-3)
        ;
\end{tikzpicture}

where 
$\\p^*:X\longrightarrow Hom_g((A,R),([0,1],R^*))$,

$x\longmapsto p_x:(A,R)\longrightarrow ([0,1],R^*)$
such that $p_x(a)=gr(x\models a)$ and $R(a,a')\leq R^*(p_x(a),p_x(a'))$ for any $a,\ a'\in A$.
Now by routine check one can conclude that $fm_g$ is the left adjoint to the functor $S_g$.
\end{proof}
\begin{theorem}\label{3.25g}
$Ext_g\circ S_g$ is the right adjoint to the functor $fm_g\circ J_g$.
\end{theorem}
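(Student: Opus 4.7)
The plan is to deduce the statement from the two adjunctions already established, by invoking the standard categorical fact that \emph{adjunctions compose}: if $F_1\dashv G_1$ with $F_1:\mathcal{A}\to\mathcal{B}$ and $G_1:\mathcal{B}\to\mathcal{A}$, and $F_2\dashv G_2$ with $F_2:\mathcal{B}\to\mathcal{C}$ and $G_2:\mathcal{C}\to\mathcal{B}$, then $F_2\circ F_1\dashv G_1\circ G_2$. By \rlemma{3.20g} we have $J_g\dashv Ext_g$ between $\mathbf{Graded\ Fuzzy\ Top}$ and $\mathbf{Graded\ Fuzzy\ TopSys}$, and by \rlemma{3.22g} we have $fm_g\dashv S_g$ between $\mathbf{Graded\ Fuzzy\ TopSys}$ and $\mathbf{Graded\ Frm^{op}}$. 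Taking $F_1=J_g$, $G_1=Ext_g$, $F_2=fm_g$, $G_2=S_g$ fits this template exactly, so the principle yields $fm_g\circ J_g\dashv Ext_g\circ S_g$, which is the theorem.

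To make this concrete, for each object $(X,\tau,\subseteq)$ of $\mathbf{Graded\ Fuzzy\ Top}$ and each $(A,R)$ in $\mathbf{Graded\ Frm^{op}}$ I would chain the two hom-set bijections supplied by the proofs of \rlemma{3.20g} and \rlemma{3.22g}:
\begin{align*}
Hom_{\mathbf{Graded\ Frm^{op}}}\bigl(fm_g(J_g(X,\tau,\subseteq)),(A,R)\bigr)
&\cong Hom_{\mathbf{Graded\ Fuzzy\ TopSys}}\bigl(J_g(X,\tau,\subseteq),S_g(A,R)\bigr)\\
&\cong Hom_{\mathbf{Graded\ Fuzzy\ Top}}\bigl((X,\tau,\subseteq),Ext_g(S_g(A,R))\bigr),
\end{align*}
where the upper iso uses $fm_g\dashv S_g$ and the lower one uses $J_g\dashv Ext_g$. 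Naturality in both arguments is inherited from the naturality of each of the two constituent adjunctions, so no separate verification of naturality is required.

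Equivalently, I would describe the unit and co-unit of the composite adjunction by pasting together the units and co-units already constructed. The unit at $(X,\tau,\subseteq)$ is $Ext_g\bigl(\eta^{fm_g,S_g}_{J_g(X,\tau,\subseteq)}\bigr)\circ \eta^{J_g,Ext_g}_{(X,\tau,\subseteq)}$, which at the level of underlying data is built from the identity $id_X$ appearing in the proof of \rlemma{3.20g} and the map $(p^*,id_A)$ appearing in the proof of \rlemma{3.22g}; the co-unit at $(A,R)$ is obtained dually.

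The main obstacle is essentially bookkeeping: one must verify that the directions of the two adjunctions line up correctly in the presence of the opposite category $\mathbf{Graded\ Frm^{op}}$, so that $fm_g\circ J_g$ genuinely sits on the left and $Ext_g\circ S_g$ on the right, rather than the other way round. Once this direction check is performed, the triangle identities for the composite adjunction reduce, by functoriality, to the triangle identities already verified in \rlemma{3.20g} and \rlemma{3.22g}, and no further non-trivial calculation is needed.
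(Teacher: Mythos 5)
Your proposal is correct and is essentially the paper's own argument: the paper also deduces the theorem directly by composing the two adjunctions $J_g\dashv Ext_g$ (Lemma \ref{3.20g}) and $fm_g\dashv S_g$ (Lemma \ref{3.22g}), though it states this in a single line without the explicit hom-set chain or the pasted unit/co-unit that you supply. Your direction check for $\mathbf{Graded\ Frm^{op}}$ and the composite unit formula are both right, so nothing further is needed.
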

\begin{proof}
Follows from the combination of the adjoint situations in Lemmas \ref{3.20g}, \ref{3.22g}.
\end{proof}
The obtained functorial relationships can be illustrated by the following diagram:  
\begin{center}
\begin{tikzpicture}
\node (C) at (0,3) {$\mathbf{Graded\ Fuzzy\ TopSys}$};
\node (A) at (-2,0) {$\mathbf{Graded\ Fuzzy\ Top}$};
\node (B) at (2,0) {$\mathbf{Graded\ Frm^{op}}$};
\path[->,font=\scriptsize ,>=angle 90]
(A) edge [bend left=15] node[above] {$fm_g\circ J_g$} (B);
\path[<-,font=\scriptsize ,>=angle 90]
(A)edge [bend right=15] node[below] {$Ext_g \circ S_g$} (B);
\path[->,font=\scriptsize ,>=angle 90]
(A) edge [bend left=20] node[above] {$J_g$} (C);
\path[<-,font=\scriptsize ,>=angle 90]
(A)edge [bend right=20] node[above] {$Ext_g$} (C);
\path[->,font=\scriptsize, >=angle 90]
(C) edge [bend left=20] node[above] {$fm_g$} (B);
\path[<-,font=\scriptsize, >=angle 90]
(C)edge [bend right=20] node[above] {$S_g$} (B);
\end{tikzpicture}
\end{center}
\section{Concluding Remarks and Future Directions}
\begin{itemize}
\item This work can be considered as a continuation of the work done in \cite{PM}. The objects of the categories, considered here, played the role of model of fuzzy geometric logic with graded consequence. These concepts are actually the byproduct of the study of fuzzy topology via fuzzy geometric logic with graded consequence \cite{PM}.
\item All the results of this paper may be obtained if we consider the value set as totally ordered frame instead of $[0,1]$.
\item It should be noted that the inclusion relation is defined using G$\ddot{o}$del arrow. Hence there is a space to change the arrow to define graded inclusion and make appropriate graded algebraic structures to get the similar connections of this work. Although we do not delve into this matter in this work but hope to deal with this in near future. 
 \item Considering the value set as any frame will give very interesting results such as,
 \begin{enumerate}
 \item For graded fuzzy topological space with graded inclusion if we take the value set any frame $L$ instead of $[0,1]$ and define inclusion relation using the fuzzy arrow viz. $\rightarrow:L\times L\longrightarrow L$ such that \\
 $a\rightarrow b$ $=\begin{cases}
        1_L & \emph{if $a\leq b$} \\
        b & \emph{otherwise},
    \end{cases}$
    
    for $a,b\in L$, where $1_L$ is the top element of $L$,\\
then Proposition \ref{gt6} does not hold. But note that all other propositions shall hold good. The fuzzy arrow defined here will be denoted by G$\ddot{o}$del like arrow over $L$.
\item In case of graded frame if we consider the relation $R$, an $L$-valued fuzzy relation, together with all the conditions other than $R(a,b)\wedge R(a,c)=R(a,b\wedge c)$ and include the condition $R(a,b)\wedge R(a,c)\geq R(a,b\wedge c)$, then we may get the categorical connection with the above described space.
\item It is to be noted that if we consider $L$-fuzzy subset ($L$ is any frame) of $D^i$, where $D$ is the domain of interpretation, as the interpretation of each $i$-ary predicate symbols and define the validity of a sequent using G$\ddot{o}$del like arrow over $L$ then the introduction rule for conjunction will be not valid. 
 \end{enumerate}
 Note that the mathematical structures involved here will become models of fuzzy geometric logic with graded consequence without the introduction rule for conjunction. Fuzzy geometric logic without the introduction rule for conjunction is itself an interesting issue to study. The work in this direction is in our future agenda.
\end{itemize}
\paragraph{Acknowledgements} 
The research work of the first author is supported by ``Women Scientists Scheme-
A (WOS-A)" under the File Number: SR/WOS-A/PM-1010/2014. This research was also supported by the \emph{Indo-European Research Training Network in Logic} (IERTNiL) funded by the \emph{Institute of Mathematical Sciences, Chennai}, the \emph{Institute for Logic, Language and Computation} of the \emph{Universiteit van Amsterdam} and the
\emph{Fakult\"at f\"ur Mathematik, Informatik und Naturwissenschaften} of the \emph{Universit\"at Hamburg}.


\bibliography{mybibfile}


\end{document}